\pgfplotsset{compat=1.15}
\newtheorem{thm}{Theorem}[section]
\newtheorem{dfn}[thm]{Definition}
\newtheorem{lem}[thm]{Lemma}
\newtheorem{cor}[thm]{Corollary}
\newtheorem{prop}[thm]{Proposition}
\newcommand{\dd}[2]{\operatorname{deg-diff}^{#2}({#1})}
\newcommand{\homm}[1]{\operatorname{hom}(#1)}
\newcommand{\ov}{\overline}
\newcommand{\bP}{\mathbb P}
\newcommand{\bE}{\mathbb E}
\title{A bipartite version of the Erd\H{o}s--McKay conjecture}
\date{}
\author{Eoin Long\thanks{School of Mathematics, University of Birmingham, UK. Email: \texttt{e.long@bham.ac.uk}.} \and Lauren\cb{t}iu Ploscaru\thanks{School of Mathematics, University of Birmingham, UK. Email: \texttt{ixp090@student.bham.ac.uk}.\newline \hspace*{1.5em} 
The second author is grateful for support through an EPSRC DTP studentship.}}
\begin{document}

\maketitle

\begin{abstract}
	An old conjecture of Erd\H{o}s and McKay states that 
	if all homogeneous sets in an $n$-vertex graph are of 
	order $O(\log n)$ then the graph contains 
	induced subgraphs of each 
	size from $\{0,1,\ldots, \Omega (n^2)\}$. We 
	prove a bipartite analogue of the conjecture: 
	if all {balanced} homogeneous sets in an 
	$n \times n$ bipartite graph are of order
	$O(\log n)$ then the graph contains 
	induced subgraphs of each size from 
	$\{0,1,\ldots, \Omega (n^2)\}$.
\end{abstract}

\section{Introduction}

Given a graph $G$ we write $\mbox{hom}(G)$ to denote the homogeneous number of $G$, given by: 
    \begin{align*}
        \homm{G} 
            := 
        \max \big \{t \in {\mathbb N}: \exists\ U\subset V(G) \mbox{ with } |U| = t \mbox{ such that } G[U] \mbox{ is complete or empty} \big \}.
    \end{align*}
\indent In its simplest form, Ramsey's theorem \cite{ramsey1930}, \cite{Erdosold} states that any $n$-vertex graph $G$ satisfies $\homm {G} = \Omega(\log n)$ and a classical result of Erd\H{o}s \cite{Erdosprob} shows that this behaviour is essentially optimal; there are $n$-vertex \emph{Ramsey graphs} $G_0$ with $\homm{G_0} = O (\log n)$. However, the existence of such Ramsey graphs has only been demonstrated indirectly via probabilistic methods and finding explicit constructions of graphs exhibiting such behaviour remains a tantalising open problem (a \$1000 Erd\H{o}s problem \cite{chung1998erdos}).

Despite the challenges in constructing Ramsey graphs, and perhaps influenced by them, there has been much success in understanding the intrinsic properties possessed by these graphs. For example, Ramsey graphs have been shown to (roughly) exhibit similar behaviour to the Erd\H{o}s-Renyi random graph w.h.p. with respect to edge density \cite{erdosmrd}, non-isomorphic induced subgraphs \cite{Shelah}, universality of small induced subgraphs \cite{PROMEL}, and the possible edge sizes and degrees appearing in induced subgraphs \cite{kwan2018ramsey}, \cite{kwan2019proof}, \cite{JKLY}. 

A challenging remaining problem in this context is the Erd\H{o}s--McKay conjecture \cite{erdos1992some}. Informally, the conjecture asks whether every Ramsey graph must contain (essentially) the entire interval of possible induced subgraph sizes. More precisely the conjecture asks whether every $n$-vertex graph $G$ with $\homm{G} \leq C \log n$ satisfies $\{0,\ldots, \Omega _C(n^2)\} \subset \{e(G[U]): U \subset V(G)\}$. The best known bound for the conjecture is due to Alon, Krivelevich and Sudakov \cite{AKS-size} who proved that such graphs necessarily contain induced subgraphs of each size in $\{0, \ldots, n^{\Omega _C(1)}\}$. The conjecture was also proved for random graphs (in a strong form) by Calkin, Frieze and McKay in \cite{calkin1992subgraph}. More recently, Kwan and Sudakov \cite{kwan2018ramsey} gave further support proving that such graphs necessarily contain induced subgraphs of $\Omega _C(n^2)$ different sizes, which improved an earlier almost quadratic bound of Narayanan, Sahasrabuhde and Tomon \cite{narayanan2019ramsey}.

Our aim here is to study the natural analogue of this conjecture for bipartite graphs. Recall that the bipartite analogue of Ramsey's theorem states that any balanced bipartite graph $G = (V_1, V_2, E)$ with $|V_1| = |V_2| = n$ contains either $K_{t,t}$ or $\overline{K_{t,t}}$ as an induced subgraph with $t = \Omega (\log n)$. This type of behaviour is again known to be the best possible in general, though explicit constructions of `bipartite Ramsey graphs' are also unknown. In fact, these are more challenging in a sense as such constructions would lead to constructions in the usual Ramsey setting (see e.g. \cite{barak20122}). In part, this has contributed to significant interest in Ramsey results in the bipartite setting, e.g. see \cite{conlon2008new}, \cite{collins2016zarankiewicz}, \cite{axenovich2021bipartite} \cite{axenovich2021large}, \cite{bucic20193}, \cite{souza2021blowup}. 

Given the context above, and the difficulties in the Erd\H{o}s--McKay conjecture, it is natural to ask what can be said about the edge sizes of induced subgraphs in balanced $n$-vertex bipartite Ramsey graphs? A general result of Narayanan, Sahasrabuhde and Tomon \cite{narayanan2017multiplication} gives some information here. These authors also studied a generalisation of the Erd\H{o}s `multiplication table problem', and showed that any bipartite graph with $m$ edges has induced subgraphs of $\widetilde{\Omega }(m)$ distinct sizes. Recently Baksys and Chen \cite{baksys2021number} raised the bipartite Ramsey question and proved an analogue of Kwan and Sudakov's theorem: any balanced bipartite Ramsey graph on vertex classes of order $n$ has induced subgraphs of $\Omega (n^2)$ different edge sizes. 

Our main theorem extends this line of research, proving an analogue of the Erd\H{o}s--McKay conjecture in the bipartite setting. Before stating it, we give a more precise definition of the Ramsey property for bipartite graphs, in a slightly more general setting.

\begin{dfn}
Given $C>0$, a graph $G = (V_1, V_2, E)$ is called $C$\emph{-bipartite-Ramsey} if for any $t_1\geq C\log_2|V_1|$ and $t_2\geq C\log_2|V_2|$ there is no induced copy of $K_{t_1,t_2}$ or $\ov{K_{t_1,t_2}}$ in $G$.
\end{dfn}

\indent We will often simply say that $G$ is a $C$-Ramsey graph when it is clear that $G$ is bipartite. 

We can now state our main result, which gives an analogue of the Erd\H{o}s--McKay conjecture in the bipartite Ramsey setting.

\begin{thm}\label{main-theorem}
Given $C>0$ there is $\alpha >0$ such that the following holds. Suppose that $G = (V_1, V_2,E)$ is a $C$-bipartite-Ramsey graph. Then $\{0,\ldots, \alpha |V_1||V_2|\} \subset \{e(G[U]): U \subset V(G)\}$.
\end{thm}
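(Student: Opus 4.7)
My plan is a two-scale argument carried out inside a large quasirandom sub-block of $G$: a coarse step will produce, for every target $m\in[0,\alpha|V_1||V_2|]$, an induced subgraph whose edge count is within a small error $L$ of $m$, and a fine-tuning step will then correct the residual error exactly.

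\textbf{Step 1 (Quasirandom arena).} I would first establish that a $C$-bipartite-Ramsey graph has edge density in $[\eps,1-\eps]$ for some $\eps=\eps(C)>0$, via an Erd\H{o}s--Szemer\'edi-style iterated-partition argument: no large sub-block of $G$ can be near-empty or near-full without producing an induced $\ov{K_{t,t}}$ or $K_{t,t}$, so iterating the obvious dichotomy collapses in $O(\log n)$ steps. I would then extract $A\subset V_1$, $B\subset V_2$ of sizes $\Theta(|V_1|),\Theta(|V_2|)$ with density $p\in[\eps,1-\eps]$ in which almost every vertex has degree within an $\eta$-fraction of the mean into the opposite side; this pseudorandom block is the arena for what follows.

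\textbf{Step 2 (Coarse sweep).} Within $(A,B)$, I would use concentration to show that $e(G[U_1,U_2])$ tracks $p|U_1||U_2|$ with small fluctuations as $U_1\subset A$, $U_2\subset B$ vary, and then run a vertex-by-vertex sweep whose consecutive-step differences equal individual degrees. Combined with a binary-type recursion on $(|U_1|,|U_2|)$, this should produce achievable edge counts forming an $L$-dense subset of $[0,\alpha|A||B|]$ for some $L=O(\log^{O(1)}n)$: a bipartite analogue of the approaches of Baksys--Chen and Kwan--Sudakov, strengthened to cover (not merely count) the edge-size range.

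\textbf{Step 3 (Fine-tuning, the main obstacle).} The crux will be closing the last $L$-gap. I would reserve at the outset a set $R\subset V_1\cup V_2$ held outside the working block throughout Step 2, and then apply the bipartite Ramsey hypothesis at small scales ($t_1,t_2=O(\log n)$) together with a pigeonhole / dependent-random-choice argument to find within $R$ a collection of $\Omega(\sqrt{L})$ vertices with distinct small degrees into the current $(U_1,U_2)$. A subset-sum / Freiman-type additive argument would then realise every residual offset in $[-L,L]$ exactly via a suitable choice of vertices from $R$ to add or remove. The principal difficulty, and where I expect the bulk of the technical work to lie, is guaranteeing that $R$ retains this degree-diversity robustly across the many vertex modifications of the coarse sweep, so that the fine-tuning lemma applies uniformly; this forces the Ramsey hypothesis to be invoked most delicately at its finest scale.
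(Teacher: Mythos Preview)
Your proposal has a genuine gap at the interface of Steps 2 and 3, and it is exactly the difficulty that the paper's machinery is built to overcome.

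\textbf{The scale mismatch in Step 2.} You assert that a vertex-by-vertex sweep combined with a binary recursion yields an $L$-dense set of achievable sizes with $L=O(\log^{O(1)} n)$. But the consecutive differences in a vertex sweep are degrees into the current $U_2$, and in a $C$-Ramsey graph every vertex of $V_1$ has degree $\Theta(|U_2|)$ into any large $U_2\subset V_2$ (this is essentially Corollary~\ref{increasing-set-corr}). So the sweep produces gaps of order $|U_2|$, not polylogarithmic. Likewise, the concentration of $e(G[U_1,U_2])$ around $p|U_1||U_2|$ has fluctuations of order $\sqrt{|U_1||U_2|}$, again far from polylog. To make the gaps small you would need $|U_2|$ small, but then the edge counts reachable are at most $|U_1|\cdot|U_2|=o(|V_1||V_2|)$. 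The ``binary-type recursion'' is where these scales would have to be glued, and you give no mechanism for that gluing; indeed, Kwan--Sudakov and Baksys--Chen obtain $\Omega(n^2)$ \emph{distinct} sizes but not an $L$-dense set for any $L=o(n)$, so you are claiming strictly more than those methods deliver.

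\textbf{The nonexistence problem in Step 3.} You plan to find in the reserved set $R$ a collection of vertices with ``distinct small degrees'' into $(U_1,U_2)$ to realise offsets in $[-L,L]$. But for $|U_2|$ of linear size, the Ramsey hypothesis forces \emph{all} degrees into $U_2$ to be $\Theta(|U_2|)$; there simply are no small-degree vertices available for subset-sum correction at scale $L=\mathrm{polylog}(n)$.

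\textbf{What the paper does instead.} The paper resolves the scale tension not through small degrees but through small \emph{degree differences}. It builds pair-stars and pair-matchings: sets of vertices (or vertex pairs) whose degrees into $V_2$ agree up to $|V_2|^{1/2}$ yet whose neighbourhoods are $\varepsilon$-diverse. After passing to a uniformly random $W\subset V_2$, Erd\H{o}s--Littlewood--Offord anti-concentration shows that each such structure yields $\Omega(|V_2|^{1/2})$ distinct degree increments in $[-3|V_2|^{1/2},3|V_2|^{1/2}]$. Summing $\Theta(|V_2|^{1/2})$ of these increment sets via an elementary sumset lemma produces a long arithmetic progression of step $d\le d_0$; a separate reserved set breaks the modular obstruction to convert the progression into a genuine interval of length $\Omega(|W|)$, which then interlocks with a coarse sweep whose gaps are \emph{also} $O(|W|)$. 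The point is that the fine and coarse scales are deliberately matched at $|W|$, not at $\mathrm{polylog}(n)$; your proposal attempts to push the fine scale all the way down to polylog without a tool for creating polylog-sized edge increments, and in a Ramsey graph no such tool exists at the level of single-vertex degrees.
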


Our proof of Theorem \ref{main-theorem} follows a similar line of approach to \cite{narayanan2019ramsey}, \cite{kwan2018ramsey} and \cite{baksys2021number}, in which we first show that one can get close to the desired edge sizes and then refine this to show that certain perturbations are typically available to allow one to adjust to the exact size. 
Anti-concentration estimates are a key tool in ensuring that the desired perturbations are `sufficiently rich' here. We prove such bounds using diversity of  vertices and diversity for pairs of vertices, introduced by Bukh and Sudakov \cite{bukh} and Kwan and Sudakov \cite{kwan2018ramsey} respectively, though a different notion of pair diversity was key here in obtaining the required behaviour. We were also able to keep track of the perturbation structure using certain sumset techniques.\vspace{1mm}

\noindent \textbf{Update:} After this paper was submitted, Kwan, Sah, Sauermann and Sawhney \cite{KSSS} uploaded a remarkable paper, which has completely resolved the Erd\H{o}s $-$ McKay conjecture. The proof is a tour de force, combining a wide range of techniques, and it is significantly different from our approach here.\vspace{1mm}

\noindent \textbf{Notation.} Given disjoint sets $V_1$ and $V_2$ we write $G = (V_1, V_2 ,E)$ to represent a bipartite graph $G$ with vertex set $V(G)=V_1\sqcup V_2$ and edge set $E(G)=E\subset V_1\times V_2$. The \emph{edge density} of $G$ is given by $e(G)\large{/}|V_1||V_2|$. Given $U_i \subset V_i$ for $i = 1,2$ we write $G[U_1, U_2]$ to denote the induced subgraph $G[U_1, U_2] = (U_1, U_2, E \cap (U_1 \times U_2))$.

Given a graph $G$ and $u,v\in V(G)$, we write $u\sim v$ if $u$ and $v$ are adjacent vertices in $G$. The neighbourhood of $u$ is given by $N_G(u) = \{v\in V(G): u\sim v\}$ and given $S \subset V(G)$ we let $N^S_G(u) := N_G(u) \cap S$; we will omit the subscript $G$ when the graph is clear from the context. We write $d^S_G(u) = |N^S_G(u)|$.

\indent Given vertices $u,v\in V(G)$ we write $\text{div}_G(u,v)$ for the symmetric difference $N(u)\triangle N(v)$. The \emph{biased diversity} of $u$ and $v$, denoted by $\text{divb}_G(u,v)$, is going to be the largest of the two sets $N(u)\setminus N(v)$ and $N(v)\setminus N(u)$. If these have the same size then we arbitrarily pick one of the sets to be $\text{divb}_G(u,v)$. Clearly $|\text{divb}_G(u,v)|\geq |\text{div}_G(u,v)|/2$.

\indent We will also be interested in \emph{ordered pairs} of vertices $\bm{p}= (u,v) \in \binom {V(G)}{2}$. Naturally, given such a pair $\bm{p}$ we can define $\text{div}_G(\bm{p}):=\text{div}_G(u,v)$. We will also make the convention throughout that all pairs are ordered so that $\text{divb}_G(\bm{p})=N(u) \setminus N(v)$, which implies $d(u) \geq d(v)$. Moreover, for later diversity purposes, we will need to study $\dd {\bf p}{S}:=d_G^S(u)-d_G^S(v)$.

Given integers $m \leq n$ we will write $[m,n]$ to denote the interval $\{m, m+1, \ldots, n\}$ and given $n \in {\mathbb N}$ we write $[n]$ for the interval $\{1,\ldots, n\}$. All logarithms in the paper will be base $2$ unless otherwise stated. Floor and ceiling signs are omitted throughout for clarity of presentation.

\section{Collected tools}

Before beginning in earnest on the proof of the theorem, we make two simple observations on the relation between vertex classes and the size of a $C$-bipartite-Ramsey graph $G = (V_1, V_2, E)$, which are useful for future reference. 
    \begin{itemize}
        \item Given any $\alpha \in (0,1)$ and $U_i \subset V_i$ with $|U_i| \geq |V_i|^{\alpha }$ for $i = 1,2$, the induced graph $G[U_1, U_2]$ is $(C \alpha ^{-1})$-Ramsey.
        \item Suppose that $|V_1| \geq |V_2|$ and let $W \subset V_2$ of size
        $|W| = 2C \log |V_2|$. By the pigeonhole principle there is a set $U \subset V_1$ with $|U| \geq |V_1|2^{-|W|}$ 
        such that for all $u\in U$ one has $N_G^W(u) = W'$. 
        It follows that $G$ contains an induced $K_{t_1, t_2}$ or $\overline {K_{t_1,t_2}}$ where $t_1 = |V_1|2^{-|W|}$ 
        and $t_2 = C \log |V_2|$. As $G$ is $C$-Ramsey it follows that 
        $t_1 \leq C \log |V_1|$, which in particular gives $|V_1| \leq |V_2|^{O_C(1)}$. Thus the vertex classes of $C$-bipartite-Ramsey graphs are necessarily polynomially related. 
    \end{itemize}

\subsection{Density control}

We start by proving a density result for bipartite Ramsey graphs, which is the analog of the Erd\H{o}s-Szemer\'{e}di theorem \cite{erdosmrd} for Ramsey graphs in general. The proof uses a well-known argument due to Kovari$-$S\'os$-$Tur\'an \cite{Kovari1954}. 

\begin{lem}\label{density-lemma}
Given $C>1$ there is $n_C\in \mathbb{N}$ such that the following holds. Suppose that $G = (V_1, V_2,E)$ is a $C$-bipartite-Ramsey graph with $|V_1|,|V_2|\geq n_C$. Then $G$ has edge density between $(16C)^{-1}$ and $1-(16C)^{-1}$.
\end{lem}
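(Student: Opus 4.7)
The plan is to apply a Kővári–Sós–Turán style double-counting argument to obtain an induced empty (resp.\ complete) bipartite subgraph of size exceeding the Ramsey threshold whenever the density is too small (resp.\ too large). Suppose for contradiction that $G$ has edge density $\rho<(16C)^{-1}$. Since the $C$-bipartite-Ramsey property is symmetric in the two vertex classes, we may assume $|V_1|\geq |V_2|$; write $n_i=|V_i|$. The goal is to build an induced $\overline{K_{t_1,t_2}}$ with $t_1\geq C\log n_1$ and $t_2\geq C\log n_2$, contradicting the hypothesis.

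Set $t_2=C\log n_2$ and for each $v\in V_1$ let $\bar d(v)=n_2-d(v)$, so that $\sum_{v\in V_1}\bar d(v)=n_1n_2(1-\rho)$. Count ordered pairs $(v,S)$ with $v\in V_1$, $S\in\binom{V_2}{t_2}$, and $N(v)\cap S=\es$; this count equals
\[
\sum_{v\in V_1}\binom{\bar d(v)}{t_2}\;\geq\;n_1\binom{n_2(1-\rho)}{t_2},
\]
by convexity of $\binom{x}{t_2}$ in $x$ and Jensen's inequality. Averaging over $S$, there exists some $S_0\in\binom{V_2}{t_2}$ whose common non-neighbourhood $U\sub V_1$ satisfies
\[
|U|\;\geq\;n_1\cdot\frac{\binom{n_2(1-\rho)}{t_2}}{\binom{n_2}{t_2}}\;\geq\;n_1\left(1-\rho-\tfrac{t_2}{n_2}\right)^{t_2}.
\]

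For $n_2$ large enough (in terms of $C$), the error term $t_2/n_2$ is negligible compared to $\rho$, so the ratio is at least $(1-2\rho)^{t_2}$. A standard estimate $\log_2(1-x)\geq -2x$ for $x\leq 1/2$ combined with $\rho\leq (16C)^{-1}$ and $t_2=C\log n_2$ gives $(1-2\rho)^{t_2}\geq n_2^{-1/2}$ (with room to spare). Hence $|U|\geq n_1 n_2^{-1/2}\geq n_1^{1/2}$, using $n_1\geq n_2$. For $n_1\geq n_C$ large enough, $n_1^{1/2}\geq C\log n_1$, so setting $t_1:=|U|$ we find an induced $\overline{K_{t_1,t_2}}$ in $G$ with $t_1\geq C\log n_1$ and $t_2\geq C\log n_2$, contradicting the $C$-Ramsey property. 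The upper bound $\rho\leq 1-(16C)^{-1}$ follows by applying the same argument to the bipartite complement of $G$, which is also $C$-bipartite-Ramsey since induced copies of $K_{t_1,t_2}$ and $\overline{K_{t_1,t_2}}$ are interchanged under complementation.

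The main obstacle is really just the bookkeeping on constants: one needs the tail factor $(1-\rho)^{t_2}$, which is of the form $n_2^{-\eps}$ for some $\eps=\eps(C)>0$, to be dominated by the factor $n_1$ when multiplied in. This forces the WLOG assumption $n_1\geq n_2$ and explains the specific choice of the constant $16$ in the statement: taking $\rho<(16C)^{-1}$ ensures $\eps<1/2$, leaving a polynomial gap $n_1^{1/2}$ that easily beats $C\log n_1$ for large $n_1$. A slightly more careful estimate of the ratio $\binom{n_2(1-\rho)}{t_2}/\binom{n_2}{t_2}$ (e.g.\ going through the product $\prod_{i<t_2}(\bar e - i)/(n_2-i)$) absorbs the lower-order correction $t_2/n_2$, and the choice of $n_C$ absorbs the comparison $n_1^{1/2}\geq C\log n_1$.
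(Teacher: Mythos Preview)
Your proof is correct and follows essentially the same Kővári--Sós--Turán double-counting argument as the paper: Jensen on $\binom{x}{t}$ gives a lower bound on the star count, pigeonhole produces a large common (non-)neighbourhood, and the chosen constant $(16C)^{-1}$ leaves a polynomial gap that beats $C\log n$. The only cosmetic differences are that you work directly in the low-density regime (finding an induced $\overline{K_{t_1,t_2}}$) whereas the paper argues in the high-density regime (finding $K_{t_1,t_2}$) and passes to the complement for the other bound, and that your WLOG is $|V_1|\geq |V_2|$ while the paper takes the opposite convention; neither affects the substance.
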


\begin{proof}
To see this let $\varepsilon =(16C)^{-1}, t_1=C\log|V_1|$,  $t_2=C\log|V_2|$ and assume that $t_1\leq t_2$. It is enough to show that our graph cannot have density larger than $1-\varepsilon$, as the other statement follows by looking at the (bipartite) complement $\ov{G}$. 

For the sake of contradiction, suppose the density $d(G) > 1-\varepsilon$, and let us count in two ways the number $M$ of stars $K_{t_1,1}$ which are formed by taking a single vertex in $V_2$ and $t_1$ of its neighbours.  
On one hand, each vertex $v\in V_2$ contributes $\binom{d(v)}{t_1}$ stars, giving us:
    \begin{align}
        \label{ineq: lower bound from convexity}
        M\geq \sum_{v\in V_2}\binom{d(v)}{t_1}\geq |V_2|\cdot \binom{e(G)/|V_2|}{t_1}\geq |V_2|\cdot \binom{(1-\varepsilon)|V_1|}{t_1} \geq |V_2| \cdot (1-2\varepsilon )^{t_1} \binom {|V_1|}{t_1}.
    \end{align}
Here we have used Jensen's inequality for the map $x\to \binom{x}{t_1}$ in the second inequality, and that $t_1 \leq \varepsilon |V_1|$ for the final step, as $|V_1| \geq n_C$. 

On the other hand, if $G$ is $K_{t_1,t_2}$-free then for each subset $S\subset V_1$ of $t_1$ vertices there are at most $t_2-1$ vertices in $V_2$ that can form a star with $S$, and so $M\leq (t_2-1)\binom{|V_1|}{t_1}$.
Combined with \eqref{ineq: lower bound from convexity} this gives $|V_2| \leq (t_2-1) (1-2\varepsilon )^{-t_1} \leq 
|V_2|^{1/2} e^{4\varepsilon t_1}$, using that $|V_2| \geq n_C$ and that $1 - x \geq e^{-2x}$ for $x \in [0, 1/2]$. It follows that $|V_2| \leq e^{8\varepsilon t_2} = |V_1|^{8C\varepsilon } =  |V_2|^{1/2}$, a contradiction.
\end{proof}

\begin{cor}\label{increasing-set-corr}
    For any $C>1$ there exists $n_C\in \mathbb{N}$ such that the following holds true. Any $C$-bipartite-Ramsey graph $G = (V_1,V_2,E)$ with $|V_1|,|V_2|\geq n_C$ contains at least $2|V_1|/3$ vertices in $V_1$ which all have degrees between $(32C)^{-1}|V_2|$ and $(1-(32C)^{-1})|V_2|$.
\end{cor}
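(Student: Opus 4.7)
The plan is to argue by contradiction, bootstrapping Lemma \ref{density-lemma} twice: once on the full graph and then on a carefully chosen induced subgraph of polynomially comparable size. The corollary's constant $32C$ is engineered to be exactly $16\cdot 2C$, which hints at the key mechanism---passing from $G$ to a sufficiently large induced subgraph costs a factor of $2$ in the Ramsey constant, and Lemma \ref{density-lemma} contributes the factor of $16$.

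Specifically, I would first suppose for contradiction that fewer than $2|V_1|/3$ vertices of $V_1$ have degree in the interval $I := [(32C)^{-1}|V_2|,\,(1-(32C)^{-1})|V_2|]$. Then strictly more than $|V_1|/3$ vertices lie outside $I$, so pigeonhole produces $U \sub V_1$ with $|U| \geq |V_1|/6$ such that either every vertex of $U$ has degree strictly below $(32C)^{-1}|V_2|$, or every vertex of $U$ has degree strictly above $(1-(32C)^{-1})|V_2|$. Passing to the bipartite complement $\ov G$ interchanges the two cases, so I may focus on the former. Summing degrees immediately gives $e(G[U,V_2]) < (32C)^{-1}|U||V_2|$, so the induced subgraph $G[U,V_2]$ has edge density strictly less than $(32C)^{-1}$.

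The remaining step is to recognise that $G[U,V_2]$ is itself bipartite-Ramsey with a controlled constant, so that Lemma \ref{density-lemma} can be reapplied. Taking $n_C$ large enough that $|V_1|/6 \geq |V_1|^{1/2}$ ensures $|U| \geq |V_1|^{1/2}$, and the first observation from the introduction (with $\alpha = 1/2$) then shows that $G[U,V_2]$ is a $2C$-bipartite-Ramsey graph. Enlarging $n_C$ further so that both $|U|$ and $|V_2|$ exceed the threshold $n_{2C}$ produced by Lemma \ref{density-lemma} applied with constant $2C$, that lemma forces the edge density of $G[U,V_2]$ to be at least $(16\cdot 2C)^{-1} = (32C)^{-1}$, contradicting the bound above. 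I do not anticipate any genuine technical obstacle: the whole proof is a two-line consequence of Lemma \ref{density-lemma}, and the main subtlety is simply tracking the Ramsey constant propagation and choosing $n_C$ large enough to absorb the various threshold requirements.
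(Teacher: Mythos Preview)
Your proposal is correct and follows essentially the same argument as the paper: assume the conclusion fails, use pigeonhole to find $U\subset V_1$ with $|U|\geq |V_1|/6$ whose vertices all have extreme degree, observe that $G[U,V_2]$ is $2C$-bipartite-Ramsey, and derive a contradiction with Lemma~\ref{density-lemma}. One small remark: despite your opening sentence, you only actually invoke Lemma~\ref{density-lemma} once (on the induced subgraph), not twice---but this does not affect the validity of the proof.
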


\begin{proof}
Suppose for the sake of contradiction that the conclusion is not true. Let $\varepsilon:=1/32C$ and suppose at most $2|V_1|/3$ vertices $v\in V_1$ have degrees between $\varepsilon|V_2|$ and $(1-\varepsilon)|V_2|$. Then, by the pigeonhole principle, without loss of generality we can assume that there is a set $U_1 \subset V_1$ with $|U_1| \geq |V_1|/6$ such that all vertices in $U_1$ have degree less than $\varepsilon|V_2|$. But then the edge density of the induced bipartite graph $G[U_1,V_2]$ is less than $\varepsilon$. On the other hand, it is easy to see that if $G$ is $C$-bipartite-Ramsey, since $6|U_1| \geq |V_1| \geq n_C$, the graph $G[U_1,V_2]$ is $2C$-bipartite-Ramsey. This is a contradiction by Lemma \ref{density-lemma}. 
\end{proof}

In particular, we can repeatedly make use of Corollary \ref{increasing-set-corr} inside a bipartite graph to obtain the following result, which will be useful later. 

\begin{lem}\label{private-ngh-lemma}
Given $C>1$ and a natural number $L$ there is $n_0\in \mathbb{N}$ such that the following holds. Suppose that $G = (V_1, V_2, E)$ is a $C$-bipartite-Ramsey graph with $|V_i|\geq n_0$. Then, taking $\varepsilon_i:=(64C)^{-i}$ for all $i\in [L]$, one can find vertices $U:=\{u_1,u_2,\ldots,u_L\}\subset V_1$ such that $\big |N(u_i)\setminus\underset{j<i}{\bigcup}N(u_j) \big |\geq \varepsilon_i|V_2|$ and $\big |V_2\setminus\underset{j\leq i}{\bigcup} N(u_j) \big |\geq\varepsilon_i|V_2|$ for all $i\in[L]$.  
\end{lem}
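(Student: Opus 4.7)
My approach is to build $u_1,\ldots,u_L$ one at a time by a greedy induction, maintaining throughout the invariant that the uncovered set $W_i := V_2 \setminus \bigcup_{j \leq i} N(u_j)$ satisfies $|W_i| \geq \varepsilon_i |V_2|$. The base case $W_0 = V_2$ is immediate; note also that this invariant already gives the second of the two conclusions required at step $i$.

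For the inductive step, suppose $u_1,\ldots,u_{i-1}$ have been selected with $|W_{i-1}| \geq \varepsilon_{i-1}|V_2|$. I would seek $u_i \in V_1 \setminus \{u_1,\ldots,u_{i-1}\}$ whose degree into $W_{i-1}$ lies in the window
\[
(64C)^{-1}|W_{i-1}| \;\leq\; d^{W_{i-1}}(u_i) \;\leq\; \bigl(1-(64C)^{-1}\bigr)|W_{i-1}|.
\]
If such a $u_i$ is found, both requirements at step $i$ follow at once: the set $N(u_i)\setminus\bigcup_{j<i}N(u_j)$ coincides with $N^{W_{i-1}}(u_i)$ and so has size at least $(64C)^{-1}|W_{i-1}|\geq (64C)^{-1}\varepsilon_{i-1}|V_2|=\varepsilon_i|V_2|$, while $W_i = W_{i-1}\setminus N(u_i)$ also has size at least $(64C)^{-1}|W_{i-1}|\geq \varepsilon_i|V_2|$, closing the induction.

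To produce $u_i$, I would pass to the induced bipartite subgraph $G[V_1,W_{i-1}]$ and apply Corollary~\ref{increasing-set-corr} to it. The first bulleted observation in the excerpt is the key enabling tool: provided $n_0$ is chosen large enough (relative to $C$ and $L$) that $|W_{i-1}|\geq \varepsilon_{L-1}|V_2|\geq |V_2|^{1/2}$, that observation with $\alpha = 1/2$ shows that $G[V_1,W_{i-1}]$ is $2C$-bipartite-Ramsey. Corollary~\ref{increasing-set-corr} then supplies at least $2|V_1|/3$ vertices $v\in V_1$ whose degrees into $W_{i-1}$ lie precisely in the above window (since $32\cdot 2C = 64C$). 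For $n_0$ sufficiently large we have $2|V_1|/3 > L$, so one such $v$ may be chosen distinct from $u_1,\ldots,u_{i-1}$, and this becomes $u_i$.

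I do not anticipate any genuine obstacle: the argument is essentially an iterated application of Corollary~\ref{increasing-set-corr} facilitated by the polynomial Ramsey-transfer observation. The only bookkeeping is to choose $n_0$ large enough in terms of $C$ and $L$ so that the residual set $W_{i-1}$ exceeds both $|V_2|^{1/2}$ (needed for the Ramsey transfer) and the threshold $n_{2C}$ coming from Corollary~\ref{increasing-set-corr}, throughout all $L$ iterations; this reduces to $n_0 \gg (64C)^{2L}$ combined with $n_0 \geq n_{2C}$.
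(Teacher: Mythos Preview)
Your proposal is correct and follows essentially the same greedy induction as the paper: maintain the uncovered set $W_{i-1}=V_2\setminus\bigcup_{j<i}N(u_j)$, observe that $G$ restricted to this set is $2C$-Ramsey once $|W_{i-1}|\geq |V_2|^{1/2}$, and invoke Corollary~\ref{increasing-set-corr} to find a vertex with degree in the desired window. The only cosmetic difference is that the paper restricts the first side to $V_1\setminus\{u_1,\ldots,u_{i-1}\}$ before applying the corollary, whereas you apply it to all of $V_1$ and then discard the at most $L-1$ previously used vertices; either works.
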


\begin{proof}
By Corollary \ref{increasing-set-corr} we can pick $u_1\in V_1$ such that $d(u_1)\in [\varepsilon_1|V_2|,(1-\varepsilon_1)|V_2|]$. Our requirement is clearly satisfied for $i=1$. Now let us assume that we have found $u_1,u_2,\ldots,u_i$ with $i < L$ that satisfy our requirements and let us look for a vertex $u_{i+1}$.

Let $S_i:=V_1\setminus\{u_1,u_2,\ldots,u_i\}$ and $T_i:=V_2\setminus\underset{j\leq i}{\cup} N(u_j)$. Note that 
$|S_i| \geq |V_1| - L \geq |V_1|^{1/2}$ and that $|T_i| \geq (64C)^{-L}|V_2| \geq |V_2|^{1/2}$ since $|V_1|, |V_2| \geq n_0$. Therefore, the subgraph $G[S_i,T_i]$ is $(2C)$-bipartite-Ramsey. Thus, by Corollary \ref{increasing-set-corr}, it follows that we can find a vertex $u_{i+1}\in S_1$ such that $d^{T_i}_G(u_{i+1})\in [\varepsilon_1|T_i|,(1-\varepsilon_1)|T_i|]$. But this is precisely the vertex we were looking for, as then we have $|N(u_{i+1})\setminus \underset{j\leq i}{\cup}N(u_j)|\geq d^{T_i}_G(u_{i+1})\geq \varepsilon_1|T_i|\geq \varepsilon_{i+1}|V_2|$ and we also get that $|V_2\setminus\underset{j\leq i+1}{\cup} N(u_j)|\geq |T_i\setminus N^{T_i}_{G}(u_{i+1})|\geq \varepsilon_1|T_i|  \geq\varepsilon_{i+1}|V_2|$.

By repeating this step $L$ times we reach our conclusion. 
\end{proof}

\subsection{Richness and diversity}
\label{div-subsect}

We will use the notion of richness, which was introduced by Kwan and Sudakov \cite{kwan2018ramsey}. 

\begin{dfn}
Given $\delta,\varepsilon > 0$, a bipartite graph $G = (V_1, V_2 ,E)$ is $(\delta,\varepsilon)$\emph{-bipartite-rich} if for each $i\in \{1,2\}$ the following holds: for every set $W \subset V_i$ with $|W| \geq \delta |V_i|$ there are at most $|V_{3-i}|^{1/5}$ vertices $v \in V_{3-i}$ such that $|N(v) \cap W| \leq \varepsilon |V_{3-i}|$ or $|\overline{N}(v) \cap W| < \varepsilon |V_{3-i}|$.
\end{dfn}

By adapting a result of Kwan and Sudakov (Lemma 4 in \cite{kwan2018ramsey}) to the bipartite context, we show that the Ramsey setting guarantees richness. Perhaps the most striking difference here is that in a general Ramsey setting one needs to move to a subgraph to find richness, whereas in the bipartite setting Ramsey graphs already possess it. 

\begin{lem}\label{rich-lemma}
Given $C,\delta>0$ there is $\varepsilon>0$ and $n_0\in \mathbb{N}$ such that the following holds true. Every $C$-bipartite-Ramsey graph $G = (V_1, V_2 ,E)$ with $|V_1|, |V_2|\geq n_0$ is $(\delta,\varepsilon)$-bipartite-rich.
\end{lem}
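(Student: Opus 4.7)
The plan is to argue by contradiction, following the approach of Kwan and Sudakov (Lemma~4 in \cite{kwan2018ramsey}) transported to the bipartite setting. Fix $C,\delta > 0$; the parameters $\varepsilon$ and $n_0$ will be chosen at the end. Suppose for contradiction that there is a $C$-bipartite-Ramsey graph $G = (V_1, V_2, E)$ with $|V_1|, |V_2| \geq n_0$ that fails to be $(\delta, \varepsilon)$-bipartite-rich. Passing to the bipartite complement $\overline{G}$ (which is again $C$-bipartite-Ramsey) if necessary, and using the symmetry between $V_1$ and $V_2$, we may assume that there exist an index $i \in \{1,2\}$, a set $W \subset V_i$ with $|W| \geq \delta|V_i|$, and a set $B \subset V_{3-i}$ with $|B| \geq |V_{3-i}|^{1/5}$ such that every vertex $v \in B$ satisfies $|N(v) \cap W| \leq \varepsilon|V_i|$.

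Next, I would study the induced bipartite subgraph $G' = G[W, B]$. Since $|W| \geq \delta |V_i| \geq |V_i|^{1/5}$ for $n_0$ large enough in terms of $\delta$, and $|B| \geq |V_{3-i}|^{1/5}$, the first observation recorded just before the Notation paragraph implies that $G'$ is itself $(5C)$-bipartite-Ramsey, with vertex classes still polynomially large in their originals. For $n_0$ large enough in terms of $C$ we may therefore apply Lemma~\ref{density-lemma} to $G'$, which forces its edge density to be at least $(80C)^{-1}$. On the other hand, the hypothesis on the vertices of $B$ directly bounds
\[
\frac{e(G')}{|W||B|} \leq \frac{\varepsilon |V_i|\cdot |B|}{|W|\cdot |B|} \leq \frac{\varepsilon}{\delta}.
\]
Choosing $\varepsilon := \delta/(100C)$ contradicts the density lower bound, completing the argument.

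This proof is essentially a one-shot reduction to Lemma~\ref{density-lemma}, so I do not anticipate any substantive obstacle. The only care needed is to verify that the vertex classes of $G[W, B]$ remain polynomially large in those of $G$ (so that the bipartite Ramsey constant blows up by only a bounded factor when restricted to the subgraph) and to confirm that both flavours of \emph{bad} vertex---those with small $|N(v) \cap W|$ and those with small $|\overline{N}(v) \cap W|$---reduce to the same scenario via the bipartite complement, together with the two choices of $i \in \{1,2\}$.
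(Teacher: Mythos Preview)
Your proposal is correct and follows essentially the same route as the paper: assume richness fails, pass to the induced bipartite subgraph on the large set $W$ and the set of bad vertices, observe this subgraph is still $O(C)$-Ramsey because both sides have size at least a fixed power of the originals, and then invoke Lemma~\ref{density-lemma} to contradict the low density. The only cosmetic difference is that the paper handles the two flavours of bad vertex by an explicit pigeonhole (taking half the bad set of a single type) rather than by passing to the complement, and consequently its $\varepsilon$ does not depend on $\delta$; your complement reduction is equivalent once you note that pigeonhole is needed first to ensure all of $B$ is bad in the same sense.
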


\begin{proof}
It is enough to prove that the bipartite-richness condition holds when $i = 1$. So set $\varepsilon:=(200C)^{-1}$ and  suppose there is a set $U_1\subset V_1$ with $|U_1|\geq \delta|V_1|$ which contradicts the bipartite-richness condition -- more precisely that there is a set $W_2\subset V_2$ with $|W_2|\geq |V_2|^{1/5}$ such that $|N(v)\cap U_1|<\varepsilon|U_1|$ or $|\ov{N(v)}\cap U_1|<\varepsilon|U_1|$ for all $v\in W_2$. Without loss of generality, we can assume that there is a subset $U_2\subset W_2$ of size $|V_2|^{1/5}/2$ such that $|N(v)\cap U_1|<\varepsilon|U_1|$ for all $v\in W_2$. But this means that the edge density of $G[U_1,U_2]$ is less than $\varepsilon$.

On the other hand, as $|U_i| \geq |V_i|^{1/5}/2 \geq |V_i|^{1/6}$ as $|V_i| \geq n_C$, and since $G = G[V_1,V_2]$ is $C$-Ramsey, it follows $G[U_1, U_2]$ is $6C$-Ramsey. But by Lemma \ref{density-lemma} such a graph must have edge density at least $(196C)^{-1}>\varepsilon,$ which is a contradiction, thus proving our result.
\end{proof}

Next, we discuss the notion of \emph{diversity}, which was introduced by Bukh and Sudakov \cite{bukh} (in the non-bipartite setting). 

\begin{dfn} 
A bipartite graph $G = (V_1, V_2,E)$ is said to be $c$\emph{-bipartite-diverse} if for each $i\in \{1,2\}$ the following holds: every vertex $v\in V_i$ has $|\text{div}_G(v,w)|\leq c|V_{3-i}|$ for at most $|V_i|^{1/5}$ vertices $w \in V_i$.
\end{dfn}

We also introduce a useful diversity notion for pairs. We note that diversity for pairs was also considered by Kwan and Sudakov in \cite{kwan2018ramsey}, who considered multisets of neighbourhoods, but we require a different notion suitable for our later applications. 

\begin{dfn} 
A bipartite graph $G = (V_1, V_2,E)$ is said to be $(c,\alpha)$\emph{-pair-diverse} if for each $i\in \{1,2\}$ the following holds true for both $i = 1,2$. For each ordered pair $\bm{p}\in \binom {V_i}{2}$ with $|\text{div}(\bm{p})|\geq \alpha|V_{3-i}|$ there are at most $|V_i|^{1/5}$ pairwise vertex disjoint pairs $\bm{p}^\prime=(x,y) \in \binom {V_i}{2}$  such that $|\text{divb}_G(\bm{p})\setminus N(x)|\leq c|V_{3-i}|$ or $|\text{divb}_G(\bm{p})\setminus N(y)|\leq c|V_{3-i}|$.
\end{dfn}

\begin{lem}\label{rich-properties}
Let $G = (V_1, V_2 ,E)$ be a $(\delta,\varepsilon)$-bipartite-rich graph with $\delta\leq 1/2 $. Then:
\begin{enumerate}[(i)]
    \item $G$ is $\varepsilon/2$-diverse;
    \item $G$ is $(\alpha\varepsilon/2,\alpha)$-pair-diverse for all $\alpha\geq 2\delta$.
\end{enumerate}
\end{lem}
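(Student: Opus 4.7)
The plan is to deduce both parts by invoking the richness hypothesis on a carefully chosen test set $W$: a neighbourhood in~(i), and the biased-diversity set in~(ii). In both cases the diversity threshold we must beat is strictly smaller than the richness threshold $\varepsilon|V_{3-i}|$, so every vertex witnessing low diversity automatically lies in the small exceptional set supplied by richness.

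For~(i), I would fix $i \in \{1,2\}$ and $v \in V_i$, and take $W \subset V_{3-i}$ to be whichever of $N(v)$ and $\overline{N}(v)$ has size at least $|V_{3-i}|/2$; the hypothesis $\delta \leq 1/2$ then guarantees $|W| \geq \delta|V_{3-i}|$. Richness (applied in the direction $V_{3-i} \to V_i$) produces a set of at most $|V_i|^{1/5}$ vertices $w \in V_i$ for which either $|N(w)\cap W|$ or $|\overline{N}(w)\cap W|$ drops below $\varepsilon|V_{3-i}|$. Any $w$ with $|\text{div}_G(v,w)| \leq (\varepsilon/2)|V_{3-i}|$ has both $|N(v)\setminus N(w)|$ and $|N(w)\setminus N(v)|$ at most $(\varepsilon/2)|V_{3-i}| < \varepsilon|V_{3-i}|$; whichever of $N(v)$ or $\overline{N}(v)$ was chosen as $W$, this forces $w$ into the richness-exceptional set, giving the bound of $|V_i|^{1/5}$ required.

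For~(ii), I would fix an ordered pair $\bm{p} = (u,v) \in \binom{V_i}{2}$ with $|\text{div}_G(\bm{p})| \geq \alpha|V_{3-i}|$ and set $W := \text{divb}_G(\bm{p}) = N(u)\setminus N(v) \subset V_{3-i}$. The biased-diversity convention gives $|W| \geq |\text{div}_G(\bm{p})|/2 \geq (\alpha/2)|V_{3-i}|$, and $\alpha \geq 2\delta$ then ensures $|W| \geq \delta|V_{3-i}|$, so richness applies and produces a set $B \subset V_i$ with $|B| \leq |V_i|^{1/5}$ containing every $x \in V_i$ with $|\overline{N}(x) \cap W| < \varepsilon|V_{3-i}|$. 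Since $\alpha \leq 1$, the threshold $\alpha\varepsilon/2$ is strictly smaller than $\varepsilon$, so any $x$ satisfying the bad condition $|\text{divb}_G(\bm{p}) \setminus N(x)| \leq (\alpha\varepsilon/2)|V_{3-i}|$ lies in $B$. A bad pair $\bm{p}' = (x,y)$ must contribute at least one endpoint to $B$; pairwise vertex-disjointness then forces each element of $B$ to appear in at most one such pair, so the total number of bad pairs is at most $|B| \leq |V_i|^{1/5}$.

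No genuine obstacle is expected: the argument amounts to unwinding definitions, with the only substantive content being the choice of $W$ and the verification that the diversity thresholds sit beneath the richness threshold. The hypotheses $\delta \leq 1/2$ in~(i) and $\alpha \geq 2\delta$ in~(ii) are precisely what make $|W|$ large enough to feed into richness, and the step most worth being careful with is the final combinatorial bookkeeping in~(ii), where pairwise disjointness is used to pass from a vertex bound to a pair bound.
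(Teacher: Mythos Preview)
Your proposal is correct and follows essentially the same approach as the paper's proof: in (i) you apply richness to whichever of $N(v)$ and $\overline{N}(v)$ is large, and in (ii) you apply it to $W=\text{divb}(\bm{p})$, in both cases checking that the relevant diversity threshold sits below the richness threshold and, in (ii), using vertex-disjointness to pass from a vertex bound to a pair bound. The only cosmetic difference is that the paper phrases (ii) as a contradiction argument, while you count directly; your version is arguably cleaner, and your explicit remark that one may take $\alpha\le 1$ (else the hypothesis on $\bm p$ is vacuous) is a point the paper leaves implicit.
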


\begin{proof}
It is enough to show that each property holds when taking $i = 1$.

\emph{(i)} For each $v\in V_1$ either $|N(v)| \geq |V_2|/2$ or $|\ov{N(v)}|\geq |V_2|/2$.
 In the former case, all but at most $|V_1|^{1/5}$ vertices $w\in V_1$ we have $|N(v)\cap \ov{N(w)}|\geq\varepsilon |N(v)|\geq \varepsilon/2\cdot |V_2|$, and in the latter, for all but at most $|V_1|^{1/5}$ vertices $w\in V_1$ we have $|\ov{N(v)}\cap {N(w)}|\geq\varepsilon |\ov{N(v)}|\geq \varepsilon/2\cdot |V_2|$. In either case,
there are at most $|V_1|^{1/5}$ vertices $w\in V_1$ with $|\text{div}_G(v,w)| < \varepsilon/2\cdot |V_2|$, as desired.

\emph{(ii)} Suppose now there is an ordered pair $\bm{p}:=\{x,y\}\in \binom {V_1}{2}$ and a collection $Y$ of $|V_1|^{1/5}$ vertex disjoint pairs $\bm{p}_i:=(x_i,y_i) \in \binom {V_1}{2}$ such that $|\text{divb}(\bm{p})\setminus N(x_i)|\leq \alpha\varepsilon/2\cdot |V_2|$ for each $i$. Note that if $|\text{div}(\bm{p})|\geq \alpha|V_2|$ then $|\text{divb}(\bm{p})|\geq \alpha/2\cdot |V_2|$, as $\text{divb}(\bm{p})$ is simply the largest of the two sets $N(x)\setminus N(y)$ and $N(y)\setminus N(x)$, whose union is $\text{div}(\bm{p})$. But in this case there are $|V_1|^{1/5}$ distinct vertices $z_i \in \{x_i, y_i\}$ such that $|\text{divb}(\bm{p})\cap \ov{N(z_i)}|\leq\alpha\varepsilon/2\cdot |V_2|\leq \varepsilon|\text{divb}(\bm{p})|$, which contradicts the richness of the set $\text{divb}(\bm{p})$.  
\end{proof}

\subsection{Probabilistic tools}

Throughout the proof we will use Markov’s inequality, Chebyshev’s inequality, the Chernoff bound and Tur\'{a}n’s theorem. Statements and proofs of all of these can be found, for example, in \cite{alon04}. We will also need a probabilistic variation of the Erd\H{o}s-Littlewood-Offord theorem, a proof of which can be found, for instance in \cite{JKLY}, or more generally as a consequence of the Doeblin $-$ Kolmogorov $-$ Levy $-$ Rogozin theorem.

\begin{thm}\label{problitoff}
Fix some non-zero parameters $a_1,a_2,\dots,a_n\in \mathbb{R}$ and $\alpha\in(0,0.5]$, then let $p_1,p_2,\dots,p_n\in [\alpha,1-\alpha]$. Suppose that $X_1,X_2,\dots, X_n$ are independent Bernoulli random variables with $X_i\sim Be(p_i)$. Then the following holds: $$\displaystyle\max_{x\in \mathbb{R}} {\mathbb P} \bigg ( \displaystyle\sum_{i=1}^n a_iX_i=x \bigg ) =O_{\alpha }(n^{-1/2}).$$
\end{thm}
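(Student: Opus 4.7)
The plan is to reduce the statement to the classical Erd\H{o}s--Littlewood--Offord inequality for sums of unbiased $\pm 1$ variables via a coupling, followed by a Chernoff-type cleanup. The key observation is that every Bernoulli $X_i$ with parameter $p_i \in [\alpha, 1-\alpha]$ can be realised as a deterministic function of two independent Bernoullis, one \emph{biased} (with success probability at least $2\alpha$) and one \emph{unbiased}. This lets us extract an unbiased ``$\pm 1$ core'' from each $X_i$ which we can then feed into the classical Erd\H{o}s bound.

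First I would set up the coupling. For each $i$, if $p_i \leq 1/2$ take independent $B_i \sim Be(2p_i)$ and $Y_i \sim Be(1/2)$ and set $X_i := B_iY_i$; if $p_i > 1/2$ take $B_i \sim Be(2(1-p_i))$, $Y_i \sim Be(1/2)$ and set $X_i := 1 - B_iY_i$. A direct check gives $\bP(X_i = 1) = p_i$ in both cases, and since $\min(p_i, 1-p_i) \geq \alpha$ each $B_i$ has success probability at least $2\alpha$. Substituting into the sum and collecting the deterministic contribution from the $p_i > 1/2$ indices, we obtain
$$\sum_{i=1}^n a_i X_i \;=\; C \;+\; \sum_{i=1}^n \tilde a_i B_i Y_i,$$
where $C \in \R$ is a constant and $\tilde a_i \in \{a_i, -a_i\}$ is still nonzero for every $i$.

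Next I would condition on $\mathbf{B} = (B_1, \ldots, B_n)$ and set $J := \{i : B_i = 1\}$. Conditional on $\mathbf{B}$, the sum $\sum a_i X_i$ is an affine function of $\sum_{i \in J}\tilde a_i Y_i$, which is a linear combination of i.i.d.\ $Be(1/2)$ random variables with nonzero coefficients. Writing $\epsilon_i := 2Y_i - 1$ puts us in the classical Erd\H{o}s--Littlewood--Offord setting, giving
$$\max_{x \in \R} \bP\brac{\sum_{i=1}^n a_i X_i = x \,\Big|\, \mathbf{B}} = O(|J|^{-1/2}).$$
Finally, since $\bE|J| \geq 2\alpha n$, a standard Chernoff estimate gives $\bP(|J| < \alpha n) \leq e^{-c_\alpha n}$, and the law of total probability combines these into the claimed bound $O_\alpha(n^{-1/2})$.

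The only conceptual step is choosing the right coupling; everything else -- the conditioning, the reduction to the classical inequality, and the Chernoff cleanup -- is routine. The one subtlety worth flagging is that after the sign flips the coefficients $\tilde a_i$ must remain nonzero for the classical Erd\H{o}s bound to apply, but this is immediate from $a_i \neq 0$ and $\tilde a_i \in \{\pm a_i\}$. A naive symmetrisation (replacing $X_i$ by $X_i - X_i'$ and invoking classical Littlewood--Offord directly) loses a square root and yields only $O(n^{-1/4})$, so the value of the coupling here is precisely that it isolates the unbiased randomness without paying this extra price.
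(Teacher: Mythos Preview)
The paper does not actually prove Theorem~\ref{problitoff}: it is quoted as a tool with the remark that its proof ``can be found, for instance in \cite{JKLY}''. So there is no in-paper argument to compare against; I can only assess your proposal on its own merits.

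Your argument is correct. The coupling $X_i = B_iY_i$ (or $1-B_iY_i$) with $B_i\sim Be(2\min(p_i,1-p_i))$ and $Y_i\sim Be(1/2)$ is exactly the right device: it isolates, inside each $X_i$, an independent fair coin that is ``active'' whenever $B_i=1$, and the Ramsey-style hypothesis $p_i\in[\alpha,1-\alpha]$ guarantees $\bP(B_i=1)\geq 2\alpha$. Conditioning on $\mathbf B$ and applying the classical Erd\H{o}s bound to $\sum_{i\in J}\tilde a_i\epsilon_i$ is clean, and the Chernoff step handles the event $|J|<\alpha n$ with an exponentially small error, which is more than enough.

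Two small remarks. First, your side comment about naive symmetrisation costing a square root is essentially right in spirit (the Cauchy--Schwarz step $\max_x\bP(S=x)^2\le\bP(S=S')$ is where the loss enters), though one could also recover the sharp rate from the symmetrised sum with more work; your coupling route is cleaner. Second, it is worth noting in the write-up that all the $B_i$'s and $Y_i$'s across different $i$ are taken mutually independent, so that the $(Y_i)_{i\in J}$ really are i.i.d.\ $Be(1/2)$ conditional on $\mathbf B$; you clearly intend this, but stating it explicitly removes any ambiguity.
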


The following natural proposition will be useful in our proof of Theorem \ref{main-theorem}.

\begin{prop}\label{markov-chain-setresult}
Given an integer $d\geq 2$, there is an integer $n_0:=n_0(d)$ such that if $X$ is a uniformly chosen subset of $[n]$, where $n\geq n_0$, then for any integer $k$ with $0\leq k<d$ one has: $$\frac{1}{d+1}\leq \bP\big(|X|\equiv k\pmod{d} \big)\leq\frac{1}{d-1}.$$
\end{prop}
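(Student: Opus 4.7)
The plan is to use a standard roots-of-unity filter. First I would observe that, since $X$ is a uniform random subset of $[n]$, the size $|X|$ has distribution $\mathrm{Bin}(n, 1/2)$. Writing $\omega = e^{2\pi i /d}$, the indicator of the event $\{|X| \equiv k \pmod d\}$ can be expressed as $\tfrac{1}{d}\sum_{j=0}^{d-1} \omega^{j(|X|-k)}$, so by linearity of expectation
\begin{equation*}
\mathbb{P}\bigl(|X|\equiv k \pmod d\bigr)
= \frac{1}{d}\sum_{j=0}^{d-1} \omega^{-jk}\,\mathbb{E}\bigl[\omega^{j|X|}\bigr]
= \frac{1}{d}\sum_{j=0}^{d-1} \omega^{-jk}\left(\frac{1+\omega^j}{2}\right)^{\!n},
\end{equation*}
where the last equality uses independence of the indicators $\mathbbm{1}[i \in X]$ for $i \in [n]$.

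Next I would isolate the main contribution. The $j=0$ summand equals $1/d$, and this is exactly the limiting value we want. For any $j \in \{1, \ldots, d-1\}$ one has $|1+\omega^j|/2 = |\cos(\pi j / d)|$. If $d$ is even and $j = d/2$ this quantity is $0$ (and the corresponding term vanishes for $n \geq 1$); for every other $j \in \{1,\ldots,d-1\}$ it is strictly less than $1$, with maximum value $\rho := \cos(\pi/d) < 1$ attained at $j=1$ and $j=d-1$. Therefore, bounding termwise,
\begin{equation*}
\left|\mathbb{P}\bigl(|X|\equiv k \pmod d\bigr) - \frac{1}{d}\right|
\leq \frac{1}{d}\sum_{j=1}^{d-1}\left|\cos(\pi j / d)\right|^{n}
\leq \rho^{\,n}.
\end{equation*}

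Since $\rho < 1$ depends only on $d$, the right-hand side tends to $0$ as $n \to \infty$. In particular, choosing $n_0 = n_0(d)$ so that $\rho^{n_0} < \tfrac{1}{d(d+1)}$ forces $\mathbb{P}(|X|\equiv k \pmod d)$ to lie in the interval $[\tfrac{1}{d} - \tfrac{1}{d(d+1)},\, \tfrac{1}{d} + \tfrac{1}{d(d-1)}] = [\tfrac{1}{d+1},\tfrac{1}{d-1}]$ for every $n \geq n_0$ and every $k$, which is exactly the claim. There is no real obstacle here; the only thing to be a little careful about is treating even $d$ separately to note that the $j = d/2$ term vanishes (rather than contributing a $\pm 1$ that would ruin the bound), but this follows trivially from $1+\omega^{d/2} = 0$.
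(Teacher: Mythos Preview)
Your proof is correct and takes a genuinely different route from the paper. The paper models $|X|\pmod d$ as a homogeneous Markov chain on $\{0,1,\ldots,d-1\}$ (each step stays put or increments with probability $1/2$), checks irreducibility and aperiodicity, identifies the uniform distribution as stationary, and then invokes the standard convergence-to-equilibrium theorem to conclude that the probabilities approach $1/d$. Your roots-of-unity filter is more elementary and more quantitative: it produces the exact expression $\tfrac{1}{d}\sum_j \omega^{-jk}\bigl(\tfrac{1+\omega^j}{2}\bigr)^n$ and an explicit geometric rate $\rho^n$ with $\rho=\cos(\pi/d)$, so one can write down a concrete $n_0(d)$ rather than appealing to an abstract limit. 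The Markov-chain approach is conceptually clean and extends immediately to biased coins, while your approach is fully self-contained and avoids importing any external theorem.

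Two cosmetic remarks. First, from $\rho^{n_0}<\tfrac{1}{d(d+1)}$ you actually get the symmetric interval $\bigl[\tfrac{1}{d}-\tfrac{1}{d(d+1)},\,\tfrac{1}{d}+\tfrac{1}{d(d+1)}\bigr]$, which already sits inside $[\tfrac{1}{d+1},\tfrac{1}{d-1}]$; the asymmetric interval you wrote is not wrong, just looser than what you proved. Second, the worry about the $j=d/2$ term ``contributing a $\pm1$'' is unfounded: for every $j\in\{1,\ldots,d-1\}$ one has $\pi j/d\in(0,\pi)$ and hence $|\cos(\pi j/d)|<1$, so even $d$ needs no special treatment at all.
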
 

Comparable statements already exist in the literature (see for example Lemma 2.3 in \cite{AHK} for a more quantitative behaviour), but to keep the paper contained we outline a simple proof, showing that it can be easily deduced from a standard result about stochastic processes. The next few paragraphs represent a brief introduction to this topic and, for more details, the reader can refer to \cite{norris1997}.    

A \emph{Markov chain} is a sequence $\bm{X}:=(X_n)_{n\geq 0}$ of random variables taking values in some common ground set $I$ such that, for all $n\geq 1$ and $i_0,i_1,\ldots,i_{n}\in I$, one has:
$$\bP\big(X_{n}=i_n\ |\ X_0=i_0;\ X_1=i_1;\ \ldots;\ X_{n-1}=i_{n-1}\big)=\bP\big(X_{n}=i_n\ |\ X_{n-1}=i_{n-1}\big).$$
\indent A Markov chain is called \emph{homogeneous} if, in addition, $p_{i,j}:=\bP(X_n=j|X_{n-1}=i)$ depends only on the states $i$ and $j$, not on the time $n$. These quantities are known as the transition probabilities of the chain. We are only interested in homogeneous chains and we can observe that in order to describe such a chain it is enough to have the initial distribution $\lambda$ of $X_0$, given by $\lambda_i:=\bP(X_0=i)$, and the transition matrix $P:=(p_{i,j})_{i,j\in I}$. Moreover, if we write $p_{i,j}^{(n)}$ for $\bP(X_{k+n}=j|X_k=i)$ then it is easy to see that $p_{i,j}^{(n)}=(P^n)_{i,j}$.

A Markov chain $\bm{X}$ on the set $I$ with transition matrix $P:=(p_{i,j})_{i,j\in I}$ is called \emph{irreducible} if for all $i,j\in I$ there is $n\geq 0$ such that $p_{i,j}^{(n)}>0$. The \emph{period} of a state $i\in I$ is defined to be the greatest common divisor of the set $\{n\geq 1:p_{i,i}^{(n)}>0\}$. The Markov chain $\bm{X}$ is called \emph{aperiodic} if all its states have period $1$. 

We say $\pi = (\pi _i)_{j\in I}$ is a \emph{stationary distribution} for a Markov chain $\bm{X}$ if starting the chain from $X_0$ with distribution $\pi$ implies that $X_n$ has distribution $\pi$ for all $n\geq 1$. As the distribution of $X_n$ is given by $\pi P^n$, where $P$ is the transition matrix, we deduce that $\pi$ is a stationary distribution if and only if $\pi P=\pi$.

\begin{thm}\label{stability-lemma}
Suppose $\bm{X}$ is an irreducible and aperiodic Markov chain on a ground set $I$ with transition matrix $P$, stationary distribution $\pi$ and any initial distribution. Then, for all $j\in I$, one has $\bP(X_n=j)\to \pi_j$ as $n\to \infty$.  
\end{thm}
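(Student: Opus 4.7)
The plan is to realise a uniformly random subset $X \subset [n]$ as the outcome of $n$ independent fair coin flips and track the residue $|X| \pmod d$ as a Markov chain on $I := \{0, 1, \ldots, d-1\}$, then invoke Theorem \ref{stability-lemma} to conclude convergence of the distribution to uniform on $I$.

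Concretely, I would set $X_m := |X \cap [m]| \pmod d$ for $0 \leq m \leq n$, where membership of each element in $[n]$ is an independent $\text{Be}(1/2)$ random variable. Then $X_0 = 0$ and at each step the residue either stays the same (if $m \notin X$) or increases by $1 \pmod d$ (if $m \in X$). This defines a homogeneous Markov chain $\bm{X} = (X_m)_{m \geq 0}$ on $I$ with transition matrix $P$ given by $p_{i,i} = 1/2$ and $p_{i,\, i+1 \bmod d} = 1/2$, and all other entries $0$. The target $\bP(|X| \equiv k \pmod d)$ is exactly $\bP(X_n = k)$ under the initial distribution $\delta_0$.

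Next I would verify the two hypotheses of Theorem \ref{stability-lemma} and identify the stationary distribution. Irreducibility is immediate: from any state $i \in I$, one reaches any other state $j \in I$ with positive probability within at most $d$ steps by repeatedly using the $+1$ transition. Aperiodicity follows from $p_{i,i} = 1/2 > 0$, so every state has period $1$. For the stationary distribution, the uniform law $\pi = (1/d, \ldots, 1/d)$ satisfies $\pi P = \pi$ since each column of $P$ sums to $1$ (each state $j$ is entered from $j$ and from $j-1 \bmod d$, each with probability $1/2$).

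Applying Theorem \ref{stability-lemma} then gives $\bP(X_n = k) \to 1/d$ as $n \to \infty$ for every $k \in I$. Since $d \geq 2$ we have $\tfrac{1}{d+1} < \tfrac{1}{d} < \tfrac{1}{d-1}$ strictly, so there is $n_0 = n_0(d)$ such that $|\bP(X_n = k) - 1/d|$ is small enough for all $n \geq n_0$ and all $k \in I$ simultaneously to land inside $[1/(d+1), 1/(d-1)]$. There is no real obstacle here beyond choosing the right Markov chain model; once the chain is set up, irreducibility, aperiodicity, and the stationary distribution are all essentially free by symmetry, and Theorem \ref{stability-lemma} does all the remaining work.
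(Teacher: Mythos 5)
Your proposal does not prove Theorem \ref{stability-lemma}; it proves Proposition \ref{markov-chain-setresult}. The statement you were asked to establish is the general convergence theorem for Markov chains: for any irreducible, aperiodic chain with stationary distribution $\pi$ and arbitrary initial distribution, $\bP(X_n=j)\to\pi_j$. Your argument instead constructs one particular chain (the residue $|X\cap[m]| \bmod d$), checks that it satisfies the hypotheses of Theorem \ref{stability-lemma}, and then \emph{invokes Theorem \ref{stability-lemma}} to conclude. Relative to the target statement this is circular: the theorem you are meant to prove is used as the main tool, and no part of your write-up addresses why an irreducible aperiodic chain converges to its stationary distribution in general. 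What you have written is, in substance, the paper's own proof of Proposition \ref{markov-chain-setresult} (same coin-flip realisation, same transition matrix $p_{i,i}=p_{i,i+1}=1/2$, same verification of irreducibility, aperiodicity and $\pi_i=1/d$), so as a proof of \emph{that} proposition it is fine; but the paper does not prove Theorem \ref{stability-lemma} either --- it cites it as a standard result from the stochastic processes literature.

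If you genuinely wanted to prove Theorem \ref{stability-lemma}, the standard route is a coupling argument: run a copy $(X_n)$ of the chain from the given initial distribution and an independent copy $(Y_n)$ started from $\pi$; irreducibility and aperiodicity make the product chain on $I\times I$ irreducible, and positive recurrence (guaranteed by the existence of $\pi$) implies the two copies meet at some almost surely finite time $T$; gluing the trajectories after $T$ gives $|\bP(X_n=j)-\pi_j|\leq \bP(T>n)\to 0$. None of this machinery appears in your proposal, so the entire content of the theorem is missing. (For the finite state space relevant to the paper's application one could alternatively argue via Perron--Frobenius or a total-variation contraction estimate, but some such argument is required.)
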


\begin{proof}[Proof of Proposition \ref{markov-chain-setresult}]
We can view choosing $X$ as going through each number from $1$ to $n$ and independently tossing a fair coin for each to decide whether it is an element of $X$ or not. Thus $|X|\ (\text{mod }d)$ can be viewed as a Markov chain on $\{0,1,2,\ldots,d-1\}$ starting at $0$ and with transition probabilities $p_{i,i}=p_{i,i-1}=0.5$ for each $0\leq i<d$, where indices ar taken modulo $d$. This chain is aperiodic as $p_{i,i}^{(1)}=0.5$ and it is also irreducible as we can reach state $i$ from state $j$ in $i-j\ (\text{mod }d)$ steps with positive probability. It is easy to see that the stationary distribution $\pi$ is given by $\pi_i=1/d$. The conclusion now follows from Theorem \ref{stability-lemma}. 
\end{proof}

\subsection{Progressions in sumsets}

Given sets $A_1,\ldots, A_K \subset {\mathbb Z}$, the sumset of $A_1,\ldots, A_K$ is the set $A_1 + \ldots + A_K$ given by:
    \begin{align*}
        A_1 + \cdots + A_K = \big \{a_1 + \cdots + a_K: a_i\in A_i \mbox{ for all } i\in [K] \big \}.
    \end{align*}
\indent Much research has focused on the topic of estimating the size or understanding the structure of sumsets under certain assumptions on the sets (see for example \cite{long-arithmetic} or Chapter 2 of \cite{tao2006additive}).
\indent For our purposes, the following elementary estimate will suffice.

\begin{lem}\label{lem: sums of sets}
    Given $\delta , B>0$ there are $C, d_0 \geq 1$ such that the 
    following holds. Suppose that $A_1,\ldots, A_K 
    \subset [-M,M]$ with $K \geq CM$ and that 
    $|A_i| \geq \delta M \geq 2$ for all $i\in [K]$. 
    Then there is $a\in {\mathbb Z}$ and $d \in {\mathbb N}$ with 
    $1 \leq d \leq d_0$ such that: 
        \begin{align}
            \label{interval AP}
        \big \{ a + i d: 0 \leq i \leq BM^2  \big \}
             \subset 
        A_1 + A_2 + \cdots + A_K.
        \end{align}
\end{lem}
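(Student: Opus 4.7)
The plan is to combine two pigeonholes with a Lev-type filling theorem: extract a common small step $d$ along which most of the $A_i$'s are structured, rescale by $d$, and argue the rescaled sumset is essentially an entire interval.

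First I would exploit $|A_i| \geq \delta M$ inside $[-M,M]$: sorting $A_i$ and averaging consecutive gaps forces some adjacent pair of elements at distance at most $4/\delta$, so $g_i := \gcd(A_i - A_i) \leq 4/\delta$ for every $i$. Pigeonholing over the bounded value of $g_i$ then yields $d \in [1, 4/\delta]$ and a subset $I \subset [K]$ with $|I| \geq K\delta/4 \geq C\delta M/4$ such that $g_i = d$ for all $i \in I$. For such $i$ I would fix some $a_i \in A_i$ and write $A_i = a_i + d\tilde A_i$, where $\tilde A_i \subset [-2M/d, 2M/d]$ has $|\tilde A_i| \geq \delta M$ and $\gcd(\tilde A_i - \tilde A_i) = 1$.

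The main step is then to show that $\sum_{i \in I} \tilde A_i$ contains an interval of consecutive integers of length $\geq c_\delta |I| M/d$. This is a Lev-type filling statement: a sumset of many sets of positive density in a bounded interval with coprime overall difference-gcd covers nearly all of its natural range, missing only $O((M/d)^2)$ points near each end. Granting this, scaling back by $d$ produces an AP in $\sum_{i \in I} A_i$ of step $d \leq 4/\delta$ and length $\Omega_\delta(|I| M/d) = \Omega_\delta(C M^2)$. Translating by any element of $\sum_{i \notin I} A_i$ (pick one from each remaining set) embeds this AP inside the full sumset, and taking $C = C(\delta,B)$ sufficiently large together with $d_0 := 4/\delta$ yields length at least $BM^2$.

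The main obstacle will be justifying the filling step. I plan to derive it either directly from Lev's theorem on $n$-fold sumsets (after first grouping the $\tilde A_i$'s by pigeonhole, so the problem reduces to many copies of a single base set with coprime differences) or via a more elementary induction that sharpens the Cauchy-Davenport bound $|A+B| \geq |A|+|B|-1$: since equality forces $A, B$ to be APs sharing a common step, the hypothesis $\gcd(\tilde A_i - \tilde A_i) = 1$ prevents this rigid structure from persisting, forcing strict growth at $\Omega(|I|)$ of the partial sumsets. Once a partial sumset exceeds the diameter of its ambient range, it must become interval-like up to $O((M/d)^2)$ endpoint defects, which suffices.
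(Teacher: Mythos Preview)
Your outline is a reasonable strategy and differs genuinely from the paper's, but the filling step is the heart of the lemma and neither of your proposed justifications actually closes it.

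The reduction to Lev's theorem by ``grouping the $\tilde A_i$'s by pigeonhole'' does not work as written: Lev's result is about iterated sums $nA$ of a \emph{single} set, and there is no bounded statistic on which you can pigeonhole $|I|=\Omega(M)$ distinct sets $\tilde A_i$ down to many copies of one set, since the relevant data (the set itself) ranges over exponentially many possibilities in $M$. Pigeonholing on the small gap $r\le 4/(\delta d)$ only tells you each $\tilde A_i$ contains some translate of $\{0,r\}$, which is far from making the sets coincide. Your alternative route via Cauchy--Davenport gives growth of $|\sum_{i\in J}\tilde A_i|$ but not interval structure: the equality case of $|A+B|\ge|A|+|B|-1$ identifies when growth is \emph{slow}, yet ``the sumset is large'' does not by itself imply ``the sumset contains a long interval'' --- that passage is exactly the nontrivial content you have deferred. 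In effect your reduction takes the general lemma to its $\gcd=1$ special case, which is still the whole difficulty.

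For comparison, the paper's argument is self-contained and avoids any filling black box. After translating so that $0\in A_i$, a short double-count finds many $A_i$ sharing a common nonzero element, so one may assume $\{0,M\}\subset A_i$ for all $i$. Working modulo $M$, the projected partial sumsets $\overline{A_1+\cdots+A_i}$ strictly increase until they stabilise at some $S_{K'}$ with $K'<M$; stability forces $S_{K'}$ to contain the subgroup of $\mathbb Z_M$ generated by the next $A_{K'+1}$, which has index $d\le d_0(\delta)$ since $|A_{K'+1}|\ge\delta M$. Finally, because $\{0,M\}\subset A_i$ for every remaining set, each of the last $K-2M$ summands freely contributes $0$ or $M$, lifting the mod-$M$ progression of step $d$ to a genuine arithmetic progression of length $\Omega(KM)\ge BM^2$ inside $A_1+\cdots+A_K$. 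This mod-$M$ stabilisation trick is what your sketch is missing; if you want to rescue your approach, one option is to run essentially this same argument on the $\tilde A_i$ to prove your filling step directly.
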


\begin{proof}
    We will prove the statement under the assumption that 
    $A_i \subset [0,M]$ for all $i\in [K]$. Note that the general case follows immediately by taking translations of the 
    sets, i.e. replacing $A_i$ with $A_i' = A_i-c_i$ for 
    $c_i = \min\{a: a\in A_i\}$. Indeed if \eqref{interval AP} holds for $A_1',\ldots, A_K'$ (which may now lie in $[0,2M]$ instead of $[-M,M]$) then \eqref{interval AP} also holds for $A_1,\ldots, A_K$ (possibly with a different value of $a$). 
    The same argument also allows us to assume that $0 \in A_i$ 
    for all $i \in [K]$.
    
    Next, we double count pairs $(m,j)\in [M]\times [K]$ for which $m$ is among the largest $\delta M/2$ elements in $A_j$. Counting by each set $A_j$, we deduce that there are $\delta KM/2$ such pairs. Therefore, there is $M^\prime\in [M]$ which appears in at least $\delta K/2$ of these pairs. Let $J:=\{j\in [K]:M^\prime\in A_j\}$, so that $|J|\geq \delta K/2$, and note that $|A_j\cap [0,M^\prime]|\geq \delta M/2$ as $M^\prime$ is one of the largest $\delta M/2$ elements in $A_j$. Recalling that also $0\in A_j$, by restricting all the sets to $[0,M']$, then eventually reordering them and slightly adjusting the parameter $C$, we can now assume that $A_i\subset [0,M]$ for all $i\in [K]$, but with $\{0,M\} \subset A_i$ as well.  
    
    
    Given a set $S \subset {\mathbb Z}$, we will let 
    $\overline{S} := \{a \in [0,M-1]: a \equiv s 
    \mod M \mbox{ for some } s \in S\}$. For each 
    $i\in [K]$ let $S_i := \overline{A_1 + \cdots + A_i}$. 
    By reordering the sets $A_1,\ldots, A_K$, we 
    may assume that $|S_{i+1}| > |S_{i}|$ for 
    $i \leq K'$ and that $S_i = S_{K'}$ for all 
    $i \geq K'$. Observe that $S_{K'}\subset [0,M-1]$ and that
    $|S_{K'}| \geq |S_1| + (K'-1) \geq K'+ 1$, therefore $K' < M$.
    
    Now, as $0 \in A_i$ for all $i\in [K]$, we have 
    $0 \in S_{K'} \subset \overline{S_{K'} + A_{K'+1}} = 
    S_{K'+1}$. As $|S_{K'+1}| = |S_{K'}|$, it follows 
    that the set $S_{K'}$ contains the subgroup of ${\mathbb Z}_M$ 
    generated by $A_{K'+1}$. However, recalling that $|S_{K'}| = |S_{K'+1}| \geq |A_{K'+1}| \geq \delta M$, we obtain that there is some $d \leq d_0(\delta )$ 
    with $d|M$
    such that $\{id : 0\leq i \leq M/d\} \subset S_{K'} \subset S_M$. 
    
    To proceed with the last step, recall that $\{0,M\} \in A_i$ 
    for all $i\in [K]$. Using this, it is easy to see 
    that as 
    $\{id : 0\leq i \leq M/d\} \subset S_{K'} 
    \subset S_M$, we have $\{M^2 + id: 0 \leq i \leq M/d\} 
    \subset A_1 + \cdots + A_{2M}$. More generally, as 
    $d | M$, we have: 
        $$
        \big \{M^2 + id + jM: 0 \leq i \leq M/d, \mbox{ } 0 \leq j \leq K-2M \big \}     \subset 
        A_1 + \cdots + A_{K},
        $$
    and so \eqref{interval AP} holds by taking $a = M^2$ and $K \geq CM 
    \geq (B + 2)M$. This completes the proof.
\end{proof}

\section{Proof of Theorem \ref{main-theorem}}

\subsection{Pair-stars and pair-matchings}

We start this section by defining two constructions which will be of central importance in our attempt to find induced subgraphs of many sizes. Let $G = (V_1, V_2 ,E)$ be a bipartite graph.

An $\varepsilon$\emph{-pair-star} of size $k$ rooted at $x_0$ associated to $V_1$ is a set ${\cal P}_S = \{x_0, x_1,x_2,\ldots,x_k\} \subset V_1$ which satisfies the following properties:
    \begin{enumerate}[(i)]
        \item $|d_G(x_j)-d_G(x_0)|\leq {|V_2|}^{0.5}$ for all $j\in [k]$;
        \item $|\text{div}(x_i,x_j)|\geq \varepsilon|V_2|$ for all $i\neq j$ in $[0,k]$.
    \end{enumerate}
We define the head of ${\cal P}_S$ to be the set $H({\cal P}_S) = \{x_0\}$.

An $\varepsilon $\emph{-pair-matching} of size $k$ associated to $V_1$ is a collection  ${\cal P}_M = \{\bm{p}_1,\bm{p}_2,\ldots,\bm{p}_k\}$ of vertex disjoint ordered pairs of vertices in $V_1$ which satisfy the following properties:
    \begin{enumerate}[(i)]
        \item  ${\bf p}_i = (x_i, y_i)$ for all $i\in [k]$ and the pairs ${\bf p}_1, \ldots, {\bf p}_k$ are vertex disjoint;
        \item $\dd{\bm{p}_i}{}\leq |V_2|^{0.5}$ for all $i\in[k]$;
        \item $|\text{divb}(\bm{p}_i)\setminus N(x_j)|\geq \varepsilon |V_2|$ and  $|\text{divb}(\bm{p}_i)\setminus N(y_j)|\geq \varepsilon |V_2|$ for all $i \neq j$ in $[k]$.
    \end{enumerate}
We define the head of ${\cal P}_M$ to be $H({\cal P}_M) = \{x_i: i \in [k]\}$.    
    
\indent By swapping $V_1$ and $V_2$ above, we can define \emph{pair-stars} and \emph{pair-matchings} associated to $V_2$. Our result in this subsection gives large pair-stars or large pair-matchings in $C$-Ramsey graphs.

\begin{lem}\label{lemma-matching-clique}
Given $C>1$ there is $\varepsilon >0$ and $n_C\in \mathbb{N}$ such that the following holds. Suppose that $G = (V_1, V_2 ,E)$ is a $C$-bipartite-Ramsey graph with $|V_1|\geq |V_2|/2 \geq n_C$. Then either:
\begin{itemize}
    \item $V_1$ contains an $\varepsilon $-pair-star of size $|V_1|^{0.5}$, or 
    \item $V_1$ contains an $\varepsilon $-pair-matching of size $|V_1|^{0.5}$.
\end{itemize}
\end{lem}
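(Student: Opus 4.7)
My plan is to construct a pair-matching directly; the pair-star alternative is a convenient shortcut when one degree bucket is particularly large, but is not strictly needed. First apply Lemma \ref{rich-lemma} with $\delta$ small (say $\delta=(800C)^{-1}$) to conclude $G$ is $(\delta,\varepsilon_0)$-bipartite-rich for some $\varepsilon_0=\varepsilon_0(C)>0$; Lemma \ref{rich-properties} then provides that $G$ is $\varepsilon_0/2$-diverse and $(\alpha\varepsilon_0/2,\alpha)$-pair-diverse with $\alpha:=2\delta\le\varepsilon_0/2$. Set $\varepsilon:=\alpha\varepsilon_0/2$ (this will be the $\varepsilon$ in the conclusion) and partition $V_1$ into at most $|V_2|^{1/2}+1$ degree-buckets $B_k$ of width $|V_2|^{1/2}$.

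Inside each $B_k$, I would find a greedy matching $M_k$ in the ``$\varepsilon_0/2$-diverse graph'' $D_k$, whose edges are the pairs $\{u,v\}\subset B_k$ with $|\text{div}(u,v)|\ge\varepsilon_0/2\cdot|V_2|$. The complement of $D_k$, the twin graph, has maximum degree $\le|V_1|^{1/5}$ by $\varepsilon_0/2$-diversity, so vertices left unmatched by a maximal greedy matching form an independent set in $D_k$ (hence a clique in the twin graph) of size $\le|V_1|^{1/5}+1$, giving $|M_k|\ge(|B_k|-|V_1|^{1/5}-1)/2$. Summing over buckets and using $|V_2|\le 2|V_1|$, the combined matching $M:=\bigsqcup_k M_k$ has $|M|\ge|V_1|/4$ for $|V_1|$ sufficiently large (the total loss $(|V_1|^{1/5}+1)(|V_2|^{1/2}+1)=O(|V_1|^{7/10})$ is easily absorbed). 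Orienting each pair so its higher-degree vertex comes first, every $\bm{p}\in M$ satisfies $\dd{\bm{p}}{}\le|V_2|^{1/2}$ (by the bucketing) and $|\text{div}(\bm{p})|\ge\varepsilon_0/2\cdot|V_2|\ge\alpha|V_2|$ (by the choice of $D_k$).

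I would then prune $M$ via pair-diversity. Define the ``bad graph'' $\Gamma$ on vertex set $M$ by joining $\bm{p},\bm{p}'$ whenever pair-matching condition (iii) fails between them in at least one direction. Since the pairs of $M$ are pairwise vertex-disjoint and each has $|\text{div}|\ge\alpha|V_2|$, the $(\alpha\varepsilon_0/2,\alpha)$-pair-diversity of $G$ forces each $\bm{p}\in M$ to block at most $|V_1|^{1/5}$ other pairs of $M$; double-counting then gives that $\Gamma$ has average degree at most $2|V_1|^{1/5}$. Tur\'an's theorem produces an independent set in $\Gamma$ of size at least $|M|/(2|V_1|^{1/5}+1)\ge|V_1|^{4/5}/12$, which exceeds $|V_1|^{1/2}$ for $|V_1|$ large enough, and this is the required $\varepsilon$-pair-matching. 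The main obstacle is keeping the three threshold constants compatible (so that the diversity threshold $\varepsilon_0/2$ exceeds the pair-diversity threshold $\alpha$, and $\varepsilon$ matches the output definition) and exploiting the asymmetry of pair-diversity: since it bounds only how many pairs a given $\bm{p}$ blocks, one must invoke Tur\'an's theorem on average degree of $\Gamma$ rather than a greedy extraction based on maximum degree.
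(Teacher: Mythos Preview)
Your proof is correct and takes a genuinely different route from the paper. The paper first counts pairs within degree buckets using Jensen to get $\Omega(|V_1|^{1.5})$ diverse pairs, views them as the edge set of an auxiliary graph $H$, and then invokes the elementary dichotomy that $H$ must contain either a matching or a star of size $\Omega(|V_1|^{0.75})$; the star case is pruned to a pair-star and the matching case to a pair-matching, each via Tur\'an, landing at size $|V_1|^{0.5}$. You bypass the dichotomy entirely: because the ``twin graph'' on each bucket has maximum degree $\le |V_1|^{1/5}$ by diversity, a maximal matching in its complement already covers all but $|V_1|^{1/5}+1$ vertices per bucket, giving a matching of size $\Omega(|V_1|)$ across buckets, which after the pair-diversity Tur\'an step yields a pair-matching of size $\Omega(|V_1|^{4/5})$. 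So your argument is both shorter (one case instead of two) and quantitatively stronger (size $|V_1|^{4/5}$ rather than $|V_1|^{1/2}$), and in particular shows the pair-star alternative in the lemma statement is never needed. The only delicate point, which you handle correctly, is the compatibility of thresholds: you need $\varepsilon_0/2\ge\alpha=2\delta$ so that the diversity guaranteed by the $D_k$-edges triggers the $(\alpha\varepsilon_0/2,\alpha)$-pair-diversity; this is fine because the $\varepsilon_0$ produced by Lemma~\ref{rich-lemma} is $(200C)^{-1}$ independently of $\delta$, so choosing $\delta=(800C)^{-1}$ makes $\alpha=\varepsilon_0/2$ exactly.
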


\begin{proof}
Let us start by dividing the interval $[0,|V_2|]$ into $\ell :={|V_2|}^{1/2}$ disjoint intervals $I_1,I_2,\ldots I_l$ of length ${|V_2|}^{1/2}$ and let, for each interval $I_j$, $n_j$ denote the number of vertices in $V_1$ whose degree lie in $I_j$. Now any vertices $x, y \in I_j$ with $d(x) \geq d(y)$ give an ordered pair $\bm{p} = (x,y)$ with $\dd{{\bf p}}{} \leq {|V_2|}^{1/2}$. Obviously $n_1+n_2+\ldots + n_{\ell}=|V_1|$, so by Jensen's inequality the collection $\mathcal{S}$ of such ordered pairs $\bm{p} = (x,y) \in \binom{V_1}{2}$ has size:
    \begin{align*}
        |{\cal S}| 
            \geq 
        \sum_{j=1}^{\ell}\binom{n_j}{2}\geq \ell \cdot \binom{|V_1|/\ell}{2}
            \geq 
        \frac {|V_1||\big(|V_1|/\ell-1\big)}{2} \geq \frac{|V_1|^{1.5}}{4},
    \end{align*}
using $\ell = |V_2|^{0.5} \leq (2|V_1| )^{0.5}$ and $|V_1| \geq n_C$. 
By Lemmas \ref{rich-properties} and \ref{rich-lemma} we deduce that our graph $G$ is $\varepsilon_0$-diverse, where $\varepsilon_0:=(400C)^{-1}$. Therefore, at most $|V_1|^{1.2}=o(|\mathcal{S}|)$ pairs from $\mathcal{S}$ fail to be ${\varepsilon}_0$-diverse. By removing such pairs we obtain a collection $\mathcal{S}_0$ of size at least $|V_1|^{1.5}/8$ such that $\dd{\bf p}{} \leq |V_2|^{1/2}$ and $|\text{div}(\bm{p})|\geq \varepsilon_0|V_2|$ for all $\bm{p}\in \mathcal{S}_0$.

Next, we view the elements of $\mathcal{S}_0$ as the (unordered) edges of a graph $H$ on $V(G)$. We claim that either $H$ has a matching of size $m:= |V_1|^{0.75}/4$ or a set of $m+1$ edges that have a common vertex. To see why this is true, suppose that neither of these two events hold and let $M$ be a largest matching in $H$. Then $|M|<m$ and so all the edges in $e(H)\setminus M$ must be adjacent to an edge of $M$. But because of the other condition, each edge of $M$ has at most $2m$ other edges adjacent to it. Thus $e(H)<2m^2 \leq |\mathcal{S}_0|$, which is a contradiction.

Let us now observe that this set of edges will create our \emph{pair-star} or \emph{pair-matching}. Indeed, assume first that we have obtained $m+1$ edges $x_0x_1,x_0x_2,\ldots,x_0x_{m+1}$ in $\mathcal{S}_0$. Again, by diversity, each vertex $x_j$ has at most $|V_1|^{0.2}$ other vertices $x_i$ such that $|\text{div}(x_j,x_i)|<\varepsilon_0|V_2|$. Thus, by Tur\'{a}n theorem there is a set $I\subset[0, m+1]$, with $0 \in I$, of size at least $(m+1)/(1+|V_1|^{0.2}) \geq |V_1|^{0.5}$ such that $|\text{div}(x_j,x_i)|\geq \varepsilon_0|V_2|$ for all $i\neq j$ in $I$. Taking $x_0$ as the root gives our pair-star.

In the other case, assume that we have obtained $m$ disjoint ordered pairs ${\bf p}_1,\ldots, {\bf p}_m$ in ${\cal S}_0$, with ${\bf p}_i = (x_i, y_i)$. Then, by Lemmas \ref{rich-lemma} and \ref{rich-properties} our graph $G$ is $((\varepsilon _0/2)^2, \varepsilon _0/2)$-pair-diverse, so for each pair $(x_j,y_j)$ there are at most $|V_1|^{0.2}$ other pairwise disjoint edges $x_iy_i$ such that $|\text{divb}(x_j,y_j)\setminus N(x_i)|< (\varepsilon _0/2)^2 |V_2|$ or $|\text{divb}(x_j,y_j)\setminus N(y_i)|< (\varepsilon _0/2)^2 |V_2|$. Setting $\varepsilon = (\varepsilon _0/2)^2$, by Tur\'{a}n, there is a set $I\subset [m]$ of size at least $m\left(1+|V_1|^{0.2}\right)^{-1} \geq |V_1|^{0.5}$ such that $|\text{divb}({\bf p}_j)\setminus N(x_i)|\geq \varepsilon |V_2|$ and $|\text{divb}({\bf p}_j)\setminus N(y_i)|\geq \varepsilon |V_2|$ for all $i\neq j$ in $I$. This represents our pair-matching. 
\end{proof}

%
%
%
%
%
%
%
%
%
%
%
%
%
%

\subsection{Degree control from pair-stars and pair-matchings}

Our next aim is to show that when picking a subset $W \subset V_2$ uniformly at random, there is a very good chance that \emph{pair-stars} and \emph{pair-matchings} associated to $V_1$ produce well distributed degree sets in $W$. The following lemma will be useful in this context.

\begin{lem}\label{small-prob-same-degree}
Let $G = (V_1, V_2, E)$ be a bipartite graph and suppose the subset 
$W \subset V_2$ is selected uniformly at random. Then:
\begin{enumerate}[(i)]
    \item if $x, y \in V_1$ with 
    $|\text{\emph{div}}(x,y)|\geq \delta|V_2|$ then 
    $\bP\left(d^{W}(x)=d^{W}(y)\right)
    =O_{\delta }\left(|V_2|^{-0.5}\right).$
    \item if $\bm{p}_1=(x_1,y_1)\in \binom {V_1}{2}$ and $\bm{p}_2=(x_2,y_2)\in \binom {V_1}{2}$ with $|\text{\emph{divb}}(\bm{p}_1)\setminus N(x_2)|\geq \delta|V_2|$ and $|\text{\emph{divb}}(\bm{p}_1)\setminus N(y_2)|\geq\delta|V_2|$ then 
    $ \bP\big( \dd  {{\bf p}_1} {W}=\dd {{\bf p}_2} {W} \big)=O_\delta\left(|V_2|^{-0.5}\right).$
\end{enumerate}
\end{lem}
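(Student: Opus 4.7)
The plan is to unify both parts by rewriting the quantities in question as integer linear combinations of independent Bernoulli$(1/2)$ random variables, and then apply the probabilistic Erd\H{o}s--Littlewood--Offord estimate (Theorem \ref{problitoff}) with $\alpha = 1/2$. Setting $X_v := \mathbf{1}[v \in W]$ for $v \in V_2$, the $X_v$ are i.i.d.\ Bernoulli$(1/2)$ since $W$ is a uniformly random subset of $V_2$. All that is then needed is a lower bound on the number of non-zero coefficients.

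For part (i), I would expand
$$d^W(x) - d^W(y) \;=\; \sum_{v \in V_2} \big(\mathbf{1}[v \in N(x)] - \mathbf{1}[v \in N(y)]\big)\, X_v \;=\; \sum_{v \in V_2} a_v X_v,$$
where each $a_v \in \{-1, 0, 1\}$, and $a_v \neq 0$ precisely when $v \in \text{div}_G(x,y)$. By hypothesis there are at least $\delta |V_2|$ such vertices, so restricting to the non-zero terms and applying Theorem \ref{problitoff} yields $\mathbb{P}\big(d^W(x) = d^W(y)\big) = O_\delta(|V_2|^{-1/2})$.

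For part (ii), I would expand analogously as
$$\dd{\bm{p}_1}{W} - \dd{\bm{p}_2}{W} \;=\; \sum_{v \in V_2} b_v X_v,$$
where $b_v := \mathbf{1}[v \in N(x_1)] - \mathbf{1}[v \in N(y_1)] - \mathbf{1}[v \in N(x_2)] + \mathbf{1}[v \in N(y_2)] \in \{-2,-1,0,1,2\}$. The substantive content here is a small case analysis showing that $b_v \neq 0$ for many $v$. I would focus on the vertices $v \in \text{divb}_G(\bm{p}_1) \setminus N(x_2)$: any such $v$ satisfies $v \in N(x_1)$, $v \notin N(y_1)$ and $v \notin N(x_2)$, so $b_v = 1 + \mathbf{1}[v \in N(y_2)] \in \{1, 2\}$, which is non-zero. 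Since the hypothesis supplies at least $\delta|V_2|$ such $v$, applying Theorem \ref{problitoff} to the non-zero coefficients delivers the claimed $O_\delta(|V_2|^{-1/2})$ bound.

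The only step requiring any real care is the case analysis for $b_v$ in part (ii); the pair-diversity hypothesis has been formulated exactly so that this analysis produces enough non-zero coefficients. (The alternative assumption $|\text{divb}(\bm{p}_1) \setminus N(y_2)| \geq \delta|V_2|$ would work by the symmetric case check focusing on the $-\mathbf{1}[v \in N(y_1)]$ term.) Beyond this no serious obstacle should arise.
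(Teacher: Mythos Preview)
Your proposal is correct and follows essentially the same route as the paper. The paper phrases both parts via a ``randomness exposure'' step---conditioning on $W$ outside $\operatorname{div}(x,y)$ in (i) and outside $T=\operatorname{divb}(\bm{p}_1)\setminus N(x_2)$ in (ii)---whereas you expand the sums directly and count the non-zero coefficients; but the key observation in (ii), that for $v\in T$ the coefficient is $1+\mathbf{1}[v\in N(y_2)]\in\{1,2\}$, and the appeal to Theorem~\ref{problitoff}, are identical.
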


\begin{proof}
(i) We will use a classical randomness exposure argument. Suppose we reveal the random set $W$ on $V_2\setminus \text{div}(x,y)$. Then, given such a choice, the difference $d^{W}(x)-d^{W}(y)$ becomes $d^{U}(x)-d^{U}(y)+\text{constant}$, where $U :=W \cap \text{div}(x,y)$. Now, $d^{U}(x)-d^{U}(y):=\sum_{v\in \text{div}(x,y)}\theta_vX_v$, where for each $v\in \text{div}(x,y)$ we have $\theta_v\in\{-1,1\}$ and $X_v\sim \text{Bern}(0.5)$. Since $|\text{div}(x,y)|\geq\delta|V_2|$, by Theorem \ref{problitoff}, the random variable $d^{W'}(x)-d^{W'}(y)$ hits any particular value with probability $O_\delta\left(|V_2|^{-1/2}\right)$. The conclusion follows from the law of total probability.   

(ii) The argument here is similar, but a little more involved. By the choice of the ordering we have $\mbox{divb}({\bf p}_1) = N(x_1) \setminus N(y_1)$. Note that the condition in the lemma now gives that the set $T = \mbox{divb}({\bf p}_1) \setminus N(x_2) = N(x_1) \setminus \big ( N(x_2) \cup N(y_1) \big )$ satisfies $|T| \geq \delta |V_2|$. Letting $X$ denote the random variable $X = \dd{{\bf p}_1}{W} - \dd{{\bf p}_2}{W}$ we have: 
    \begin{align*}
        X = d^{W}(x_1)-d^{W}(y_1) -d^{W}(x_2) + d^{W}(y_2).
    \end{align*}
Now note that if we expose the random set $W \cap (V_2 \setminus T)$ then the random variable $X$ reduces to $d^{W \cap T}(x_1)+d^{W \cap T}(y_2)-C$, where $C$ is some constant depending only on $W\cap (V_2 \cap T)$ -- crucially, here we use that $y_1$ and $x_2$ have no neighbour in $T$. But $d^{W \cap T}(x_1)+d^{W \cap T}(y_2)=\sum_{v\in T}\theta_vX_v$ where $\theta_v\in\{1,2\}$ is a constant for each $v\in T$ and $X_v\sim \text{Bern}(0.5)$ so that $\{X_v\}_{v\in T}$ are independent. By Theorem \ref{problitoff}, since $|T|\geq \delta|V_2|$, the random variable $d^{W \cap T}(x_1)+d^{W\cap T}(y_2)$ takes any particular value with probability at most $O_\delta\left(|V_2|^{-0.5}\right)$. The conclusion again follows from the law of total probability. 
\end{proof}

Let $G = (V_1, V_2, E)$ be a bipartite graph and select a vertex set $W \subset V_2$. Given an $\varepsilon $-pair star ${\cal P} = \{x_0,x_1,\ldots, x_k\}$ rooted at $x_0$ and associated to $V_1$, we write: 
    \begin{align*}
        A_{\cal P}^W
            :=
         \big \{d^W(x_i) - d^W(x_0): i\in [k] \big \} \cap \big [-3|V_2|^{0.5},3|V_2|^{0.5} \big ].
    \end{align*}
\indent The following lemma provides a useful estimate on the size of $A_{\cal P}^W$.

\begin{lem}\label{lemma-good-star}
Given $\varepsilon > 0$ there is $\delta >0$ such that the following holds. Let $G = (V_1, V_2, E)$ be a bipartite graph with $|V_1|\geq |V_2|/2$ and let ${\cal P} = \{x_0,\ldots, x_{|V_2|^{0.5}}\}$ be a $\varepsilon $-pair-star of size ${|V_2|}^{0.5}$ rooted at $x_0$ and associated to $V_1$. Suppose that a set $W \subset B$ is selected uniformly at random, then ${\mathbb P}\big (|A_{\cal P}^W| \geq \delta |V_2|^{0.5} \big ) \geq 3/4$.
\end{lem}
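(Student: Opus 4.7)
The plan is to introduce the random variables $Y_i := d^W(x_i) - d^W(x_0)$ for $i \in [k]$, where $k = |V_2|^{0.5}$, and to show with probability at least $3/4$ that both (a) at least $k/2$ of them lie in the window $[-3|V_2|^{0.5}, 3|V_2|^{0.5}]$, and (b) the number of coincidences $Y_i = Y_j$ over unordered pairs $i \neq j$ is $O_\varepsilon(k)$. A short Cauchy--Schwarz count then converts these into $\Omega_\varepsilon(k)$ distinct values of $Y_i$ inside the window, and these are exactly the elements of $A_{\cal P}^W$.

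For (a), I would write $Y_i = \sum_{v\in N(x_i)\sm N(x_0)} X_v - \sum_{v\in N(x_0)\sm N(x_i)} X_v$ with $X_v$ the indicator of $\{v\in W\}$, and compute the first two moments. Property (i) of the pair-star gives $|\bE[Y_i]| = \frac12 |d(x_i)-d(x_0)| \leq \frac12 |V_2|^{0.5}$, while $\mathrm{Var}(Y_i) = \frac14 |\text{div}(x_i,x_0)| \leq \frac14 |V_2|$. Chebyshev then yields $\bP\brac{|Y_i| > 3|V_2|^{0.5}} \leq 1/25$, and Markov applied to the count of out-of-window indices gives $|I| \geq k/2$ (for $I$ the set of in-window indices) with probability at least $1 - 2/25$.

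For (b), I would invoke Lemma \ref{small-prob-same-degree}(i): property (ii) of the pair-star ensures $|\text{div}(x_i,x_j)| \geq \varepsilon|V_2|$ for all $i \neq j$, hence $\bP(Y_i = Y_j) = \bP(d^W(x_i) = d^W(x_j)) = O_\varepsilon(|V_2|^{-0.5})$. Summing over the $\binom{k}{2}$ pairs, the expected number $T$ of coincidences satisfies $\bE[T] = O_\varepsilon(k^2 |V_2|^{-0.5}) = O_\varepsilon(k)$ since $k = |V_2|^{0.5}$, and another Markov application produces a constant $C_\varepsilon$ with $\bP(T \leq C_\varepsilon k) \geq 1 - 1/20$.

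On the intersection of these two events, whose probability is at least $3/4$ after tuning, the $|I| \geq k/2$ in-window values take some number $m$ of distinct values with multiplicities $n_1, \ldots, n_m$ summing to at least $k/2$; the coincidence count among them is bounded above by $T$ and below by $\frac12 \sum n_j(n_j-1) \geq \frac12\brac{(k/2)^2/m - k/2}$ via Cauchy--Schwarz, which forces $m \geq \Omega_\varepsilon(k)$. Taking $\delta = \delta(\varepsilon)$ sufficiently small then gives $|A_{\cal P}^W| \geq \delta|V_2|^{0.5}$. The only delicate point is choosing the various constants so that the two Markov failure probabilities jointly stay below $1/4$; beyond that, the decisive input is the anti-concentration estimate from Lemma \ref{small-prob-same-degree}(i), itself a direct consequence of the probabilistic Erd\H{o}s--Littlewood--Offord theorem.
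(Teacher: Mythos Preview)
Your proof is correct and follows essentially the same approach as the paper's: Chebyshev plus Markov to get many $Y_i$ in the window, Lemma~\ref{small-prob-same-degree}(i) plus Markov to bound the number of coincidences, and then a combinatorial extraction of many distinct values. The only cosmetic difference is that the paper phrases the final step via Tur\'an's theorem on the ``coincidence graph'' rather than your Cauchy--Schwarz multiplicity count, but these are equivalent one-line arguments.
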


\begin{proof}
To begin, pick $j\in [|V_2|^{0.5}]$ and set $D_j := d^W(x_j) - d^W(x_0)$. By Chebyshev's inequality, we have $\left|D_j-\bE[D_j]\right|\leq 2{|V_2|}^{0.5}$ with probability at least $15/16$. As $\left|\bE[D_j]\right|\leq |V_2|^{1/2}/2$, it follows by triangle inequality that $\bP\left(|d^{W}(x_0)-d^{W}(x_j)|\leq 3|V_2|^{0.5}\right)\geq 15/16$.

Next, call a vertex $x_j$ with $j\in [|V_2|^{0.5}]$ \emph{bad} if $|d^{W}(x_0)-d^{W}(x_j)|\leq 3|V_2|^{0.5}$ and \emph{good} otherwise. Note $\bP(x_j\text{ is bad})\leq 1/16$ and so $\bE\left[\big|\{j\in [|V_1|^{0.5}]:x_j\text{ is bad}\}\big|\right]\leq |V_2|^{0.5}/16$. If $U$ denotes the set of \emph{good} vertices, then it follows by Markov that $\bP\left(|U|\geq |V_2|^{0.5}/2\right)\geq 7/8$.

Now we build a graph $H$ on $\{x_1,\ldots, x_{|V_2|^{1/2}}\}$ where we join two vertices by an edge in $H$ if their degrees in $W$ are equal. By Lemma \ref{small-prob-same-degree} (i) there is an edge in $H$ between two vertices $x_i$ and $x_j$ with probability $O_{\varepsilon } \left(|V_2|^{-1/2}\right)$. Therefore $\bE[e(H)]=O_{\varepsilon }\left((|V_2|^{0.5})^2\cdot |V_2|^{-0.5}\right)$ and by Markov we easily deduce that, with probability at least $7/8$, one has $e(H)=O_{\varepsilon }\left(|V_2|^{0.5}\right)$.

Combining our estimates, by applying the union bound we find that $|U| \geq |V_2|^{0.5}/2$ and that $e(H[U]) \leq e(H)=O_{\varepsilon}\left(|V_2|^{0.5}\right)$ with probability at least $3/4$. But then the average degree of $H$ is of order $O_{\varepsilon }\left(1\right)$, hence by Tur\'{a}n $H$ contains an independent set of size $\Omega_{\varepsilon}\left({|V_2|^{0.5}}\right)$. This set though gives us precisely the vertices with pairwise distinct degrees that we sought. 
\end{proof}

Let $G = (V_1, V_2, E)$ be a bipartite graph, and set $W \subset V_2$. Given an $\varepsilon $-pair-matching ${\cal P} = \{{\bf p}_1,\ldots, {\bf p}_k\}$ associated to $V_1$, we write: 
    \begin{align*}
        A_{\cal P}^W
            := 
        \big \{\dd{{\bf p}_i}{W}: i\in [k] \big \} \cap \big [-3|V_2|^{0.5},3|V_2|^{0.5} \big ].
    \end{align*}
\indent The following lemma gives the analogous estimate for $A_{\cal P}^W$ when ${\cal P}$ is an pair-matching, and can be proven in exactly the same way as in Lemma \ref{lemma-good-star}, replacing Lemma \ref{small-prob-same-degree} (i) in the proof with Lemma \ref{small-prob-same-degree} (ii) instead.

\begin{lem}\label{lemma-good-matching}
Given $\varepsilon > 0$ there is $\delta >0$ such that the following holds. Let $G = (V_1, V_2, E)$ be a bipartite graph with $|V_1|\geq |V_2|/2$ and let ${\cal P}$ be a $\varepsilon $-pair-matching in $V_1$ of size $|V_2|^{0.5}$. Suppose that a set $W \subset B$ is selected uniformly at random, then ${\mathbb P}\big (|A_{\cal P}^W| \geq \delta |V_2|^{0.5} \big ) \geq 3/4$.
\end{lem}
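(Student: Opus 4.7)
The statement is the direct pair-matching analogue of Lemma \ref{lemma-good-star}, and the plan is to run through the argument of that lemma verbatim, with vertex differences $d^W(x_j) - d^W(x_0)$ replaced by pair differences $\dd{\bm{p}_i}{W}$, and with Lemma \ref{small-prob-same-degree}(i) replaced by Lemma \ref{small-prob-same-degree}(ii) in the edge-counting step.

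More concretely, write ${\cal P} = \{\bm{p}_1,\ldots,\bm{p}_k\}$ with $k = |V_2|^{0.5}$ and $\bm{p}_i = (x_i,y_i)$, and set $D_i := \dd{\bm{p}_i}{W} = d^W(x_i) - d^W(y_i)$. By condition (ii) of the pair-matching definition we have $\dd{\bm{p}_i}{} \leq |V_2|^{0.5}$, so $|\mathbb{E}[D_i]| \leq |V_2|^{0.5}/2$ (since each neighbour in $V_2$ is included in $W$ with probability $1/2$). By Chebyshev applied to $D_i$, we get $|D_i - \mathbb{E}[D_i]| \leq 2|V_2|^{0.5}$ with probability at least $15/16$, so by the triangle inequality each index $i$ is ``good'' (meaning $|D_i| \leq 3|V_2|^{0.5}$) with probability at least $15/16$. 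Markov then gives that, with probability at least $7/8$, the set $U$ of good indices satisfies $|U| \geq |V_2|^{0.5}/2$.

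Next, define an auxiliary graph $H$ on the index set $[k]$ by joining $i$ and $j$ whenever $D_i = D_j$. Here is precisely where condition (iii) of the pair-matching feeds in: for any $i \neq j$, we have $|\text{divb}(\bm{p}_i) \setminus N(x_j)| \geq \varepsilon |V_2|$ and $|\text{divb}(\bm{p}_i) \setminus N(y_j)| \geq \varepsilon |V_2|$, so Lemma \ref{small-prob-same-degree}(ii) gives $\mathbb{P}(D_i = D_j) = O_\varepsilon(|V_2|^{-0.5})$. Thus $\mathbb{E}[e(H)] = O_\varepsilon(k^2 |V_2|^{-0.5}) = O_\varepsilon(|V_2|^{0.5})$, and Markov yields $e(H) = O_\varepsilon(|V_2|^{0.5})$ with probability at least $7/8$.

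Taking a union bound, with probability at least $3/4$ both events hold, so $e(H[U]) \leq e(H) = O_\varepsilon(|V_2|^{0.5})$ on a set $U$ of size at least $|V_2|^{0.5}/2$. The average degree of $H[U]$ is then $O_\varepsilon(1)$, and Tur\'an's theorem produces an independent set $I \subset U$ of size $\Omega_\varepsilon(|V_2|^{0.5})$. The corresponding values $\{D_i : i \in I\}$ are pairwise distinct and all lie in $[-3|V_2|^{0.5}, 3|V_2|^{0.5}]$, so $|A^W_{\cal P}| \geq \delta |V_2|^{0.5}$ for a suitable $\delta = \delta(\varepsilon) > 0$, as required. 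The only step requiring any genuine thought is the anti-concentration bound for $D_i - D_j$, but that has already been isolated in Lemma \ref{small-prob-same-degree}(ii), so no real obstacle remains.
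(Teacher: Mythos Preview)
Your proposal is correct and follows exactly the approach the paper indicates: the paper does not write out a separate proof of Lemma~\ref{lemma-good-matching} but simply states that it ``can be proven in exactly the same way as in Lemma~\ref{lemma-good-star}, replacing Lemma~\ref{small-prob-same-degree}(i) in the proof with Lemma~\ref{small-prob-same-degree}(ii) instead,'' which is precisely what you carry out. Your identification of how condition~(iii) of the pair-matching feeds into the hypothesis of Lemma~\ref{small-prob-same-degree}(ii) is the only point requiring verification, and you handle it correctly.
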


\subsection{Breaking modular obstructions}

\begin{lem}\label{modular-lem}
Given a natural number $d > 1$ and $\varepsilon > 0$ there are $L, D >0$ such that the following holds. Suppose that $G = (V_1, V_2, E)$ is a bipartite graph and that $\{u_1,\ldots, u_L\} \subset V_1$ such that $S_i = N_G(u_i) \setminus \cup _{j < i} N_G(u_j)$ satisfies $|S_i| \geq D$ for all $i\in [L]$. Then, if a subset $W$ is chosen uniformly at random from $V_2$, with probability at least $1 - \varepsilon $ for every pair $(k,m)$ with $0 \leq k \leq m$ and $2 \leq m \leq d$ there is $i\in [L]$ such that $d_G^{W}(u_i) \equiv k \mod m$.
\end{lem}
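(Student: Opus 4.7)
The starting observation is that, by construction, the sets $S_1, \ldots, S_L$ are pairwise disjoint subsets of $V_2$, and for each $i \in [L]$ we have the disjoint decomposition $N_G(u_i) = S_i \sqcup R_i$, where $R_i := N_G(u_i) \cap \bigcup_{j<i} S_j$. Consequently, for a uniformly random $W \subset V_2$,
\[
d_G^W(u_i) = X_i + Y_i, \qquad X_i := |W \cap S_i|, \quad Y_i := |W \cap R_i|,
\]
where $X_i$ is a function of $W \cap S_i$ alone while $Y_i$ is a function of $W \cap \bigcup_{j<i} S_j$ alone. In particular, once one reveals $W$ on $V_2 \setminus S_i$, the value $Y_i$ is determined, while $W \cap S_i$ is still a uniformly random subset of $S_i$.

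Fix a pair $(k,m)$ with $2 \le m \le d$ and $0 \le k < m$. Choosing $D \ge \max_{2 \le m \le d} n_0(m)$, where $n_0(\cdot)$ comes from Proposition \ref{markov-chain-setresult}, we have $|S_i| \ge D$ and so that proposition (applied with modulus $m$) yields $\Pr[X_i \equiv c \pmod{m}] \ge \frac{1}{d+1}$ for every residue $c$. Hence, conditional on any exposure of $W$ on $V_2 \setminus S_i$, the conditional probability that $d_G^W(u_i) = X_i + Y_i \not\equiv k \pmod{m}$ is at most $1 - \frac{1}{d+1}$.

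Let $B_i = \{d_G^W(u_i) \not\equiv k \pmod m\}$ and expose $W$ sequentially on $S_1, S_2, \ldots, S_L$. After the first $i-1$ exposures, the events $B_1, \ldots, B_{i-1}$ are already determined (as are all $Y_j$ for $j \le i$), while $W \cap S_i$ is still uniform in $S_i$; by the previous paragraph, $\Pr[B_i \mid B_1, \ldots, B_{i-1}] \le 1 - \frac{1}{d+1}$. An easy induction therefore gives
\[
\Pr\!\left[\bigcap_{i=1}^L B_i\right] \le \left(1 - \frac{1}{d+1}\right)^L.
\]
A union bound over the at most $\sum_{m=2}^{d} m \le d^2$ admissible pairs $(k,m)$ then bounds the overall failure probability by $d^2\bigl(1 - \tfrac{1}{d+1}\bigr)^L$, which is less than $\varepsilon$ once $L = L(d,\varepsilon)$ is chosen sufficiently large. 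The only point requiring care is the conditioning: the pairwise disjointness of the $S_i$ is precisely what decouples the ``new'' randomness $X_i$ from everything revealed before, and Proposition \ref{markov-chain-setresult} then supplies the uniform-in-conditioning anti-concentration that drives the geometric decay.
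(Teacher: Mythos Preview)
Your proposal is correct and follows essentially the same argument as the paper: you exploit the disjointness of the $S_i$ to obtain, via Proposition~\ref{markov-chain-setresult}, a uniform lower bound on the conditional probability that $d_G^W(u_i)$ hits any fixed residue class modulo $m$, then chain these bounds to get geometric decay and finish with a union bound over the $O(d^2)$ pairs $(k,m)$. The only cosmetic difference is that the paper conditions on $W\cap(V_2\setminus S_i)$ in one step, whereas you expose $W$ sequentially on $S_1,\ldots,S_L$; the underlying independence structure and the resulting estimate $\bigl(1-\tfrac{1}{d+1}\bigr)^L$ are identical.
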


\begin{proof}
Suppose that $W \subset V_2$ is selected as in the lemma. To begin, we fix a pair $(k,m)$ so that $0 \leq k \leq m$ and $2 \leq m\leq d$. For $i \in [L]$ let $E_i$ denote the event $E_i := \{d^W_G(u_i) \not \equiv k \mod m\}$. Our first aim is to upper bound ${\mathbb P}\big ( \cap _{i\in [L]} E_i )$. To do this, we can observe that:
\begin{align*}
    \bP\big(\cap_{i=1}^L E_i\big)=\prod_{i=1}^L \bP\left(E_i|\cap_{j<i}E_j\right). 
\end{align*}
\indent Now the event $\cap _{j<i}E_i$ is entirely determined by the choice of $W' = W \cap (V_2 \setminus S_i)$. Thus, to upper bound $\bP\left(E_i|\cap_{j<i}E_j\right)$, it suffices to upper bound $\bP\left(E_i|W' = W_0\right )$ for each choice of $W_0$. However, given such a choice of $W_0$, the conditional probability $\bP\left(E_i|W'=W_0\right)$ becomes $\bP\big(d^{W\cap S_i}_G(u_i)\not\equiv k_i\ (\text{mod }m)\big)$ for some $0\leq k_i<m$. Besides this, from our hypothesis we know that $W \cap S_i \sim \mbox{Bin}(|S_i|,0.5)$ and $|S_i| \geq D$, so by Proposition \ref{markov-chain-setresult} this gives $\bP\left(E_i|\cap_{j<i}E_j\right) \leq \max _{W_0} {\mathbb P}(E_i| W' = W_0) \leq d(d+1)^{-1}$. Combining all these, we obtain:
    \begin{align*}
        \bP\big(\cap_{i=1}^L E_i\big) 
            \leq 
        \prod_{i=1}^L \bP\left(E_i|\cap_{j<i}E_j\right)
            \leq 
        \Big ( 1 - \frac {1}{d+1} \Big ) ^{L}\leq \exp\left(-L(d+1)^{-1}\right).
    \end{align*}
\indent To finish the proof, let $F_{(k,m)}$ denote the event that $\{d_G^W(u_i) \not \equiv k \mod m \mbox{ for all } i\in [L]\}$. We have shown that ${\mathbb P}(F_{(k,m)}) \leq \exp ( -L(d+1)^{-1} )$. It follows that the probability that some congruence is not obtained is ${\mathbb P} \big ( \cup _{(k,m)} F_{k,m} \big ) \leq \sum _{(k,m)} {\mathbb P}(F_{k,m}) \leq d^2 \exp (-L(d+1)^{-1} ) \leq \varepsilon $, provided that $L$ (and $D$ from above) are sufficiently large.
\end{proof}

\subsection{Completing the proof of Theorem \ref{main-theorem}}

The following lemma is the final key step in our proof.

\begin{lem}\label{lem-interval-of-sizes}
Given $C>1$, there are constants $a_C > 0$ and $n_C\in \mathbb{N}$ such that the following holds. Every $C$-bipartite-Ramsey graph  
$G = (V_1, V_2, E)$ with $|V_1|,|V_2|\geq n_C$ has: 
$$\big [a_C|V_1||V_2|, 2a_C|V_1||V_2| \big ] \subset \big \{e(G[U]): U \subset V(G) \big \}.$$
\end{lem}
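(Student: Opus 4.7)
The plan is to realise every integer $t$ in the target interval $[a_C|V_1||V_2|, 2a_C|V_1||V_2|]$ as $e(G[U_1 \cup W])$ through three layers of perturbations of decreasing magnitude: a \emph{macro layer} giving jumps of order $|V_2|$, a \emph{medium layer} providing an arithmetic progression of increments covering a range of $\Theta(|V_2|)$, and a \emph{micro layer} breaking modular obstructions so that the AP becomes a full interval. Without loss of generality $|V_1| \geq |V_2|$. Inside $V_1$ I would reserve three disjoint pools: a reservoir $R$ of $\Theta(|V_1|)$ vertices of typical degree (Corollary~\ref{increasing-set-corr}); a collection of $K = \Theta(|V_2|^{1/2})$ disjoint $\varepsilon$-pair-stars or $\varepsilon$-pair-matchings of size $|V_2|^{1/2}$, obtained by iterating Lemma~\ref{lemma-matching-clique} inside the subgraph on the unused part of $V_1$ (which remains $O(C)$-bipartite-Ramsey by the first observation after Theorem~\ref{main-theorem}); and modular-adjustment vertices $\{u_1, \ldots, u_L\}$ from Lemma~\ref{private-ngh-lemma}.

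Now sample $W \subset V_2$ uniformly at random. Lemmas~\ref{lemma-good-star} and \ref{lemma-good-matching} imply that each pair-structure $P_j$ contributes, with probability $\geq 3/4$, a swap-change set $A_j \subset [-3|V_2|^{1/2}, 3|V_2|^{1/2}]$ of size $\geq \delta|V_2|^{1/2}$; by Markov at least $K/2$ of them do so simultaneously for a suitable $W$. In parallel, Lemma~\ref{modular-lem} forces with probability $1 - o(1)$ that every residue class modulo every integer $d \leq d_0$ is realised by some degree $d^W(u_i)$, and Chernoff gives $d^W(v) \approx d(v)/2$ for every $v \in R$. Fix such a $W$. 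Applying Lemma~\ref{lem: sums of sets} with $M = 3|V_2|^{1/2}$ to the good $A_j$'s yields an arithmetic progression of length $\Theta(|V_2|)$ and common difference $d \leq d_0$ inside the sumset $\sum_j A_j$. Optionally inserting one $u_i$ shifts the total by any prescribed residue modulo $d$, upgrading this AP into a full interval $J$ of $\Theta(|V_2|)$ consecutive achievable edge-count changes relative to any base edge count built from the pair-structure ``left'' vertices.

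The macro sweep then bridges $J$ across the target interval. Take $B_0$ to consist of the pair-structure left vertices together with an initial reservoir subset $R_0 \subseteq R$ chosen so that $e(G[B_0, W])$ sits inside $[a_C|V_1||V_2|, 2a_C|V_1||V_2|]$; this is achievable because inserting reservoir vertices one at a time varies the edge count continuously in steps of $d^W(v) \leq |V_2|/2$. Order the remaining reservoir vertices so that adjacent anchor edge counts $e_k$ differ by at most $|J|/2$, which is possible after shrinking $a_C$ relative to the constants of Lemma~\ref{lem: sums of sets}. From each anchor $e_k$ the medium and micro layers reach every integer in an interval of length $|J|$ centred at $e_k$; since consecutive anchors are within $|J|/2$ of each other, the union covers the entire target interval $[a_C|V_1||V_2|, 2a_C|V_1||V_2|]$.

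The main obstacle I expect is fitting all three pools inside $V_1$ while preserving all probabilistic guarantees for a single $W$, and verifying that the medium and micro perturbations produce the same interval $J$ around \emph{every} anchor $e_k$, not only around the initial one. The first point is tightest in the nearly-balanced case $|V_1| \sim |V_2|$, where the pair-structures and reservoir compete for vertices; this is handled by taking $a_C$ small so that the reservoir requires only $\sim a_C|V_1|$ vertices. The second point follows from noting that the sumset AP and the residues $d^W(u_i) \bmod d$ depend only on $W$ and on degrees of the pair/micro vertices, all of which are disjoint from $R$ and therefore independent of the macro choices. Once these are in place the three layers add independently, and the three-layer construction exhibits every required edge count.
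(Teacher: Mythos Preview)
Your three-layer scheme (macro reservoir, medium sumset of pair-structure swaps, micro modular correctors) is exactly the architecture of the paper's proof, and the way you combine Lemmas~\ref{lemma-matching-clique}, \ref{lemma-good-star}/\ref{lemma-good-matching}, \ref{lem: sums of sets}, \ref{modular-lem} and Corollary~\ref{increasing-set-corr} is right. There is, however, one genuine gap that the paper handles with an extra device you have not included.

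The problem is the starting edge count. Your base set $B_0$ must contain all the heads $H({\cal P}_j)$ of the pair-structures, since the sumset $\sum_j A_j$ records swap-changes relative to this fixed base. When many ${\cal P}_j$ are pair-matchings, the total number of head vertices is $K\cdot |V_2|^{1/2}=\Theta(|V_2|)$, and with $W$ a uniform subset of $V_2$ each head has degree $\Theta(|V_2|)$ into $W$. Hence $e_0 := e(G[\cup_j H({\cal P}_j),W]) = \Theta(|V_2|^2)$, with an implicit constant you cannot push below any fixed $a_C$. In the balanced regime $|V_1|\sim |V_2|$ this already exceeds $2a_C|V_1||V_2|$ once $a_C$ is small, and since reservoir insertions and the $O(|V_2|)$-range swap corrections only move the count by $O(|V_2|)$ or upward, your claim that one can choose $R_0$ so that $e(G[B_0,W])$ lands inside $[a_C|V_1||V_2|,2a_C|V_1||V_2|]$ fails. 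Taking $a_C$ small, which you invoke to fit the reservoir, makes this worse, not better.

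The paper's fix is to first pass to a tiny set $V_2'\subset V_2$ with $|V_2'|=c|V_2|$ (with $c$ at the bottom of the parameter hierarchy), build the pair-structures inside $G[U_1,V_2']$ with size $|V_2'|^{1/2}$, and sample $W$ uniformly from $V_2'$ rather than from $V_2$. Then the heads contribute at most $K\cdot |V_2'|^{1/2}\cdot |W| \leq C_0|V_2|^{1/2}\cdot (c|V_2|)^{1/2}\cdot c|V_2| = C_0 c^{3/2}|V_2|^2$ edges, which for $c \ll C_0^{-1}, a_C$ sits safely below $a_C|V_1||V_2|$. The interval $J$ produced by the medium and micro layers now has length $\Theta(|W|)=\Theta(c|V_2|)$, which is still large enough to absorb the macro step size (since reservoir vertices have degree $O(|W|)$ into $W$), and the rest of your argument goes through verbatim. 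If you insert this restriction to $V_2'$ at the outset and track the parameter $c$, your sketch becomes a complete proof matching the paper's.
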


\begin{proof}
We start by fixing parameters $1\geq  C^{-1} \gg \varepsilon \gg \delta \gg d_0^{-1} \gg L^{-1} \gg C_0^{-1} \gg c \gg n_C^{-1} > 0$. Without loss of generality, we may assume that $|V_1| \geq |V_2|$. We also fix an arbitrary set $V_2' \subset V_2$ with $|V_2'| = c|V_2|$ and a partition $V_1 = U_1 \cup U_2 \cup U_3$, where $|U_i| \geq |V_1|/4$ for $i\in [3]$. 

To begin, we claim that there are ${\cal P}_1,\ldots, {\cal P}_K$ with {$K := C_0|V_2|^{0.5}$} such that each ${\cal P}_i$ is either an $\varepsilon$-pair-star or a $\varepsilon $-pair-matching in $G[U_1, V_2']$ of size $|V_2'|^{0.5}$ and so that the ${\cal P}_i$'s are all vertex disjoint. To see this, suppose that we have found ${\cal P}_1,\ldots, {\cal P}_i$ and now seek ${\cal P}_{i+1}$. Let $U_1'$ denote the subset of $U_1$ with the vertices of $\cup _{j\leq i} {\cal P}_j$ removed and apply Lemma \ref{lemma-matching-clique} to $G[U_1', V_2']$, noting that as $|U_1'| \geq |U_1| - 2 K |V_2'|^{0.5} \geq |U_1| - 2(C_0|V_2|^{0.5}) (c|V_1|)^{0.5} \geq |U_1|/2 \geq |V_1|/8$, making $G[U_1', V_2']$ a $2C$-Ramsey graph. By Lemma \ref{lemma-matching-clique}, we can find a $\varepsilon $-star-pair or a $\varepsilon $-pair-matching ${\cal P}_{i+1}$ of size $|U_1'|^{0.5} \geq |V_2'|^{0.5}$, which gives the desired set (perhaps after removing some of its elements). Thus such a collection ${\cal P}_1,\ldots, {\cal P}_K$ exists.

Our next step is to apply Lemma \ref{private-ngh-lemma} to $G[U_2, V_2']$, which is again $2C$-Ramsey, to find a set $S_2 = \{s_1,\ldots, s_L\} \subset U_2$ such that $|N(s_i) \setminus ( \cup _{j< i}N(s_j) )| \geq |V_2|^{0.5}$ for all $i\in [L]$. It is possible to do this since $C^{-1} \gg L^{-1} \gg n_C^{-1}$.

The final step of preparation is to select a set $W \subset V_2'$ uniformly at random. For each $i\in [K]$ let $A_i := A_{{\cal P}_i}^W$. Now consider the following events: \begin{itemize}
    \item Let ${\cal E}_1$ denote the event that, for at least $K/2$ values $i\in [K]$, we have $|A_i| \geq \delta |V_2'|^{1/2}$. By Lemmas \ref{lemma-good-star} and  \ref{lemma-good-matching}, and using that $\varepsilon \gg \delta $, we see that for any $A_i$ we have $|A_i| \geq \delta |W|^{1/2}$ with probability at least $3/4$, so a simple application of Markov gives ${\mathbb P}({\cal E}_1) \geq 1/2$; 
    
    \item Secondly, let ${\cal E}_2$ denote the event that, for every $0 \leq k \leq m$ and $2 \leq m \leq d_0$, there exists $s \in S_2$ with $d^W_G(s) \equiv k \mod m$. By the choice of $S_2$, Lemma \ref{modular-lem} gives 
    ${\mathbb P}({\cal E}_2) \geq 7/8$; 
    
    \item Lastly, let ${\cal E}_3$ denote the event that $|W| \geq |V_2'|/4$. By Chebyshev's inequality ${\mathbb P}({\cal E}_3) \geq 7/8$. 
\end{itemize}
 It follows from the union bound that ${\mathbb P}({\cal E}_1 \cap {\cal E}_2 \cap {\cal E}_3) \geq 1/2$. Fix a choice of $W \subset V_2' \subset V_2$ for which all three events occur. By reordering, we get that $|A_i| \geq \delta |V_2'|^{1/2} \geq \delta |W|^{1/2}$ for $i\in [K/2]$. 

We are now ready to complete the proof. Take $V_1^{\mbox{init}}$ to be the union of the heads of ${\cal P}_i$, i.e. $V_1^{\mbox{init}} := \cup _{i \leq L/2} H({\cal P}_i)$, and $e_0 := e(G[V_1^{\mbox{init}}, W])$. Take $a \in A_i$ and note the following:  
\begin{enumerate}[(i)]
    \item If ${\cal P}_i = \{x_0,\ldots, x_{|V_2'|^{0.5}}\}$ is a pair-star rooted at $x_0$ then by definition 
    $d^W(x_j) - d^W(x_0) = a$ for some $j\in [|V_2'|^{1/2}]$. Removing $x_0$ from $V_1^{\mbox{init}}$ and adding $x_j$ changes the number of edges in the resulting graph by exactly $a$. 
    \item Similarly, if ${\cal P}_i$ is a pair-matching then there is 
    ${\bf p} = (x,y) \in {\cal P}_i$ such that $\dd{{\bf p}}{W} = a$. 
    Removing $x$ from $V_1^{\mbox{init}}$ and adding $y$ to it changes 
    the number of edges by exactly $a$.
    \item The edits from different ${\cal P}_i$'s can be performed independently of one 
    another, resulting in the same changes to the edge size given in (i) and (ii).
\end{enumerate}
From observations (i)$-$(iii) we can immediately deduce that: 
$$\{e_0\} + A_1 + A_2 + \cdots + A_{K/2} \subset \big \{ e(G[U,W]): U \subset U_1 \big \}.$$ 
\indent By definition of $A_i:=A_{{\cal P}_i}^W$, we have 
$A_i \subset [-3|V_2'|^{1/2}, 3|V_2'|^{1/2}] \subset [-6|W|^{1/2}, 6|W|^{1/2}]$ and $|A_i| \geq \delta |W|^{1/2}$. By taking $M = 6|W|^{1/2}$ and $B = 1$ in Lemma \ref{lem: sums of sets}, as $\delta ,1 \gg C_0^{-1}, d_0^{-1}$, it follows that there is $a \in {\mathbb Z}$ and $1 \leq d \leq d_0$ such that \eqref{interval AP} holds, which gives: 
\begin{align}
    \label{equation: edge interval}
    \{e_0 + a + id: 0 \leq i \leq 3|W|\} \subset
    \{e_0\} + \{a + id: 0 \leq i \leq M^2\} 
    & \subset 
    \{e_0\} + A_1 + A_2 + \cdots + A_{L/2} \nonumber \\
    & \subset
    \{e(G[U, W]): U \subset U_1 \}.
\end{align}
\indent Furthermore, as $d \leq d_0$, by the choice of $S_2$ there are $s_{i_0}, s_{i_1},\ldots, s_{i_{d-1}} \in S_2$ with $d^W(s_j) \equiv j \mod d$ for each $j\in [0,d-1]$. Combined with \eqref{equation: edge interval}, setting $e_1 = e_0 + a + |W|$, this gives: 
\begin{align}
    \label{equation: short interval}
    \big [e_1, e_1 + |W| \big ]
        \subset 
    \big \{e(G[U \cup \{s\}, W]): U \subset U_1, s \in S_2 \big \}.
\end{align}
\indent To finish the proof, note that $|U_3| \geq |V_1| /4 \geq |V_1|^{1/2}$ and $|W| \geq |V_2'|/4 \geq |V_2|^{1/2}$. As $G$ is $C$-Ramsey, we get that $G[U_3, W]$ is $(2C)$-Ramsey. It follows from Corollary \ref{increasing-set-corr} that at least $2|U_3|/3$ vertices in $U_3$ have degrees between $(64C)^{-1}|W|$ and $(1-(64C)^{-1})|W|$ in $G[U_3, W]$. In particular, $\{e(G[U',W]) : U' \subset U_3\}$ contains an element from each interval $[|W|i, |W|(i+1)]$ with $i\in [0,(96C)^{-1}|U_3|]$. However, 
$e(G[U \cup \{s\} \cup U', W]) = e(G[U \cup \{s\}, W]) + e(G[ U', W])$ for any $U \subset U_1, s \in S_2$ and $U' \subset U_3$. Therefore, we deduce from \eqref{equation: short interval} that:
\begin{align*}
    \big [ e_1 + |W|, e_1 + |W| + (96C)^{-1}|U_3||W| \big ]
        \subset 
    \big \{ e(G[U \cup \{s\} \cup U', W]): U \subset U_1, s \in S_2, 
    U' \subset U_3 \big \}.
\end{align*}
\indent We can finally claim that the lemma holds with $a_C := c/4000C$. To see this, note that: 
\begin{align*}
    e_1 + |W| = e_0 + a + |W| 
        & \leq 
        (K/2)|V_2'|^{0.5}|W| + (K/2)(6|W|)^{0.5}|W| + |W|\\ 
        &\leq 4K|W|^{1.5} \leq 4(C_0|V_2|)^{0.5}(c|V_2|)^{1.5} = 4c^{1.5}(C_0)^{0.5}|V_2|^2 \leq a_C|V_1||V_2|,
\end{align*}
where we have used that $|W| \leq c|V_2|$, that $c \ll C_0^{-1}, C^{-1}$, and that $|V_2| \leq |V_1|$.\\
\indent Since $a_C|V_1||V_2| = (c/4000C)|V_1||V_2| \leq (200C)^{-1}|U_3||W|$, it is quite easy to observe that $[a_C|V_1||V_2|, 2a_C|V_1||V_2|] \subset \{e(G[U]): U \subset V(G)\}$, as required. This completes the proof.
\end{proof}

With Lemma \ref{lem-interval-of-sizes} in hand, it is now easy to complete the proof of Theorem \ref{main-theorem}.

\begin{proof}[Proof of Theorem \ref{main-theorem}]
By making $\alpha $ small enough we may assume, without losing generality, that $|V_1|\geq |V_2|$ and that $|V_2|$ is large enough so that our estimates below hold. We can also assume that $C>1$ since the Ramsey condition still holds when we increase $C$. 

First note that as $G = (V_1, V_2, E)$ is $C$-Ramsey, there is a vertex $v \in V_2$ with degree at least $(32C)^{-1}|V_1|$ in $V_1$, by Corollary \ref{increasing-set-corr}. But then the induced subgraphs of the form $G[W, \{v\}]$,
where $W \subset V_1$, give all edge sizes in $[0,(32C)^{-1}|V_1|]$ (and so in particular the analogue of the Alon $-$ Krivelevich $-$ Sudakov theorem from \cite{AKS-size} is easier in the bipartite Ramsey context).

On the other hand, observe that by fixing any $W_1 \subset V_1$ and $W_2 \subset V_2$ with $|W_i| \geq |V_i|^{1/3}$ for $i \in \{1,2\}$, the graph $H = G[W_1, W_2]$ is $(3C)$-Ramsey, as $G$ is $C$-Ramsey. It follows by applying Lemma \ref{lem-interval-of-sizes} to $H$ that there are induced subgraphs of each size in $[a_{3C}|W_1||W_2|, 2a_{3C}|W_1||W_2|]$. It follows, taking $M = \{(m_1, m_2): |V_i|^{1/3} \leq m_i \leq  |V_i| \mbox{ for } i \in \{1,2\}\}$, that 
$G$ contains a subgraph of each size in $\cup _{(m_1,m_2) \in M} [a_{3C}m_1m_2, 2a_{3C}m_1m_2]$. It can be easily seen that these sets together cover the interval 
$[a_C(|V_1||V_2|)^{1/3}, 2a_C |V_1||V_2|]$. 

Finally, note that the intervals $[0,(32C)^{-1}|V_1|]$ and $[a_{3C}(|V_1||V_2|)^{1/3}, 2a_{3C} |V_1||V_2|]$ together cover 
$[0, 2a_{3C} |V_1||V_2|]$, since $|V_1| \geq |V_2|$ and $|V_2|$ is large enough, thus completing the proof of the theorem.
\end{proof}

\section{Concluding remarks}

In this paper we have proven a bipartite analogue of the Erd\H{o}s--McKay conjecture, showing that any $C$-bipartite-Ramsey graph $G = (V_1, V_2, E)$ must contain induced subgraphs of all sizes in $[0, \Omega _C(|V_1||V_2|)]$. Of course the Erd\H{o}s--McKay conjecture itself remains an outstanding problem, and we hope that some of our ideas will be useful in future approaches here. 

Another interesting direction is to understand the effect of weakening the Ramsey hypothesis in Theorem \ref{main-theorem}, as considered already in a number of other settings (see e.g. \cite{alon1989graphs}, \cite{narayanan2018induced}, \cite{LP2022distinct}). That is, what are we able to say about the edge sizes of induced subgraphs of a bipartite graph $G = (V_1, V_2, E)$, which do not contain induced copies of $K_{t_1,t_2}$ or $\overline{K_{t_1, t_2}}$, where $t_1$ and $t_2$ are now general parameters?  

Lastly, we note that Narayanan, Sahasrabudhe and Tomon in \cite{narayanan2017multiplication} showed that any bipartite graph with $m$ edges must have induced subgraphs of $\Omega (m / \log ^{12}(m))$ different sizes. An obvious upper bound here for the number of such sizes is $m$, though if $m$ is a perfect square $m = k^2$ then the complete bipartite graphs $K_{k,k}$ allows only $m / (\log m)^{0.086... + o(1)}$ edge sizes. The authors of \cite{narayanan2017multiplication} conjecture that when $m = k^2$ the graph $K_{k,k}$ is extremal here. It would be interesting to understand whether our approach, perhaps combined with stability arguments, could be applied to improve knowledge here.

\printbibliography

\end{document}